\newtheorem{thm}{Theorem}
\newtheorem{lem}{Lemma}
\newdefinition{example}{Example}
\newdefinition{rmk}{Remark}
\begin{document}

\begin{frontmatter}


\title{A low-rank algorithm for evaluating Lyapunov operator $\varphi$-functions within matrix-valued exponential integrators}
\author[author1]{Dongping Li\corref{cor1}}
\ead{lidp@ccsfu.edu.cn}
\author[author1]{Xiuying Zhang}
\ead{xiuyingzhang@ccsfu.edu.cn}
\author[author2]{Hongjiong Tian}
\ead{hjtian@shnu.edu.cn}
\cortext[cor1]{Corresponding author.}
\address[author1]{Department of Mathematics, Changchun Normal University, Changchun 130032, PR China}
\address[author2]{Department of Mathematics, Shanghai Normal University, Shanghai 200234, PR China}

\begin{abstract}
In this work we present a low-rank algorithm for computing low-rank approximations of 
large-scale Lyapunov operator $\varphi$-functions. These computations play a crucial role in implementing 
of matrix-valued exponential integrators for large-scale stiff matrix differential equations, where the (approximate) solutions are of low rank.
The proposed method employs a scaling and recursive procedure, complemented by a quasi-backward error analysis to determine the optimal parameters. The computational cost is primarily determined by the multiplication of sparse matrices with block vectors.
Numerical experiments validate the effectiveness of the proposed method as a foundational tool for matrix-valued exponential integrators 
in solving differential Lyapunov equations and Riccati equations.
\end{abstract}

\begin{keyword}
Low-rank approximation\sep $\varphi$-functions\sep Lyapunov operator\sep Matrix-valued exponential integrators\sep Differential Lyapunov equations\sep Differential Riccati equations

\MSC[2010] 65L05\sep 65F10\sep 65F30

\end{keyword}

\end{frontmatter}

\section{Introduction}\label{sec:1}

In this paper we consider the efficient implementation of matrix-valued exponential integrators for large-scale matrix differential equations (MDEs)
of the form
\begin{equation}\label{1.1}
\left\{
\begin{array}{l}
X'(t)=AX(t)+X(t)A^T+N(t,X(t)),~~\\
X(t_0)=X_0,
\end{array}
\right.
\end{equation}
where $A\in \mathbb{R}^{N\times N}$, $N:\mathbb{R}\times \mathbb{R}^{N\times N}\rightarrow \mathbb{R}^{N\times N}$ is the nonlinear part,
and $X(t)\in \mathbb{R}^{N\times N}$.
MDEs~(\ref{1.1}) play significant roles in various fields such as optimal control, model reduction, and
semi-discretization of two-dimensional parabolic partial differential equation (see e.g., \cite{Abou,Antoulas,Ascher}).
The well-known differential Lyapunov equations (DLEs) and differential Riccati equations (DREs) fall under this category.
In many applications, MDEs~(\ref{1.1}) exhibit low-rank inhomogeneous term and initial value, which are critical
features often efficiently utilized in large-scale computations.
Over the last few years, numerous time integration methods capable of exploiting the low-rank structure
have been proposed for solving large-scale DLEs, DREs and related problems,
including BDF, Rosenbrock methods, splitting methods and Krylov-based projection methods,
see, e.g., \cite{Behr,Jbilou18,Mena,Simoncini20,Koskela,Lang15,Stillfjord1,Stillfjord2,Stillfjord3}.

Exponential integrators have a rich history and are highly competitive with implicit methods for integrating stiff problems.
A primary feature of this class of integrators is that they treat the linear part exactly and the nonlinear part approximately.
This gives the integrators good stability properties and enables them to integrate stiff problems explicitly.
To date, numerous vector-valued exponential integration schemes have been developed (see, e.g., \cite{MH2006,MH2011,MH2009,Luan2013,Tokman2006}).
For a comprehensive overview of recent developments in exponential integrators, we refer the reader to  \cite{Hochbruck2010} and the references therein. Recently, low-rank matrix-valued exponential Rosenbrock-type integrators have been introduced as a competitive alternative for solving DREs \cite{Li2021}.  
For vector-valued exponential integrators, a critical component is the computation of 
matrix $\varphi$-functions during the implementation process. In contrast,  matrix-valued exponential integrators necessitate the evaluation of operator $\varphi$-functions at each time step.

For MDEs~(\ref{1.1}), matrix-valued exponential integrators can be constructed through the following reformulation of the problem as an integral equation.
Let $\mathcal{L}_A:\mathbb{R}^{N\times N}\rightarrow \mathbb{R}^{N\times N}$ denote the Lyapunov operator by
\begin{equation*}\label{1.1b}
\begin{aligned}
\mathcal{L}_A[X] = AX+XA^T, ~~ A\in \mathbb{R}^{N\times N}.
\end{aligned}
\end{equation*}
Using the variation-of-constants formula, the exact solution
of MDEs~(\ref{1.1}) can be represented as (see \cite{Behr})
\begin{equation}\label{1.1a}
\begin{aligned}
X(t_n+h)=e^{h\mathcal{L}_A}[X({t_n)}]+h\int_{0}^{1}e^{(1-\tau)h\mathcal{L}_A}[N(t_n+\tau h,X(t_n+\tau h))]\text{d}\tau.
\end{aligned}
\end{equation}
One then can derive the matrix-valued exponential integrators by interpolating the nonlinear term $N(t_n+\tau h,X(t_n+\tau h))$ in (\ref{1.1a}).
In the simplest case, we approximate the nonlinearity $N(t_n+\tau h,X(t_n+\tau h))$ by $N_n:=N(t_n,X(t_n))$
and thus obtain the matrix-valued exponential time integration, known as exponential Euler scheme (denoted by \texttt{mExpeul})
\begin{eqnarray}\label{1.1c}
X_{n+1}=e^{h\mathcal{L}_A}[X_n]+h\varphi_{1}(h\mathcal{L}_A)[N_n], 
\end{eqnarray}
where
\begin{eqnarray*}\label{1.1d}
\varphi_1(z)=\int_{0}^{1}e^{(1-\theta)z}d\theta=1+\frac{z}{2!}+\frac{z}{3!}+\cdots.
\end{eqnarray*}
If $N(t,X(t))$ is a constant matrix, scheme (\ref{1.1c}) provides the exact solution of Eq. (\ref{1.1}). 

Scheme (\ref{1.1c}) requires the computation of the actions of the operator exponential and $\varphi_{1}$ at each time step of the integration. 
For more general matrix-valued exponential integrators,  the central challenge is to evaluate the actions of general $\varphi$-functions of operators on specific matrices:
\begin{equation}\label{1.2}
\varphi_l(\mathcal{L}_A)[Q],~~l\in \mathbb{N},
\end{equation}
where $A\in \mathbb{R}^{N\times N}$ is large and sparse,  $Q\in \mathbb{R}^{N\times N}$ is symmetric and of low rank, and functions $\varphi_{l}(z)$ are defined as
\begin{equation*}\label{1.3}
\varphi_{l}(z)=
\begin{cases}
e^{z}, & l=0,\\
\frac{1}{(l-1)!}\int_{0}^{1}e^{(1-\theta)z}\theta^{l-1}d\theta, & l\ge 1.\\
\end{cases}
\end{equation*}
The operator $\varphi$-functions can be rewritten as Taylor series expansion
\begin{equation*}\label{1.4}
\varphi_l(\mathcal{L}_A)=\sum\limits^{\infty}_{k=0}\frac{1}{(k+l)!}{\mathcal{L}_A}^k, ~~l\in \mathbb{N},
\end{equation*}
where ${\mathcal{L}_A}^k$ denotes the $k$-fold composition of the Lyapunov operator $\mathcal{L}_A$.
In particular, ${\mathcal{L}_A}^0=I_N$, here $I_N$ is the $N\times N$ identity matrix. These operator functions satisfy the recursive relation
\begin{equation}\label{1.5}
\varphi_k(\mathcal{L}_A)=\mathcal{L}_A\varphi_{k+1}(\mathcal{L}_A)+\frac{1}{k!}I_N,~~k\geq 0.
\end{equation}
Furthermore, we have
\begin{equation}\label{1.6}
\varphi_k(\mathcal{L}_A)=\mathcal{L}_A^{-k} \left(e^{\mathcal{L}_A}-\sum\limits^{k-1}_{j=0}\mathcal{L}_A^{j}/j!\right),~~k\geq 0.
\end{equation}
For a singular operator $\mathcal{L}_A$, this formula in (\ref{1.6}) is interpreted by expanding its right-hand side of as a power series in $\mathcal{L}_A$.

Besides the matrix-valued exponential integrators, operator functions like (\ref{1.2}) also appear in finite-time controllability and observability Grammians of linear control
systems, stochastic differential equations and filtering theory, see \cite{Arnold74,Jazwinski}.
Evaluating these operator functions numerically is not a trivial task. Van loan \cite{Loan} proposed to approximate (\ref{1.2})
by computing a single exponential of a larger block upper triangular matrix, a further generalization in this direction can be found in \cite{Carbonell08}.
However, this approach may suffer from
an overflow error from a computational viewpoint and is difficult to exploit the low-rank structure of $Q$.
In \cite{Li2021}, the standard numerical quadrature formula is utilized to solve (\ref{1.2}); however, the approach may be time-consuming to obtain highly accurate results while less accurate approximation may lead to numerical instability during
the implementation of exponential integrators. Recently, a modified scaling and squaring method has been presented
for dense and moderate-sized problems in \cite{Li2023}, but its application in large-scale problems is limited.
The aim of the present paper is to propose a low-rank method to compute these operator functions.
By exploiting the sparsity of $A$ and the low-rank representation of $Q$, we present low-rank
approximation based on an $LDL^T$-type decomposition. The method uses a scaling and
recursive procedure and can be implemented by matrix-vectors products to reduce the storage and computational complexities.

The paper is organized as follows. In Section~\ref{sec:2}, we briefly introduce the scaling and
recursive procedure for (\ref{1.2}) and show how to choose the optimal parameters to achieve the required accuracy.
In  Section~\ref{sec:3}, we exploit the $LDL^T$-based algorithm, which constitutes the primary focus of this paper.
Numerical experiments in Section~\ref{sec:4} demonstrate the performance of the proposed method and illustrate that it
can be used as the basis of matrix-valued exponential integrators for solving
large-scale DLEs and DREs. Finally, we draw some conclusions in  Section~\ref{sec:5}.

Throughout the paper $\|\cdot\|$ refers to any consistent matrix or operator norm, in particular $\|\cdot\|_1$ and $\|\cdot\|_F$ denote
the 1-norm and the Frobenius norm, respectively. We denote by $\text{Lyap}(N)$ the set of Lyapunov operators $\mathcal{L}_A$
for any $A\in \mathbb{R}^{N\times N}.$ The notation $\rho(\cdot)$ denotes the spectral radius of a matrix or operator, and
$\otimes$ represents the Kronecker product of matrices.
$\lceil x\rceil$ denotes the smallest integer not less than $x$. Matlab-like notations are used whenever necessary.

\section{Full-rank computing}
\label{sec:2}
\subsection{The scaling and recursive procedure}
\label{subsec:a}
If $\|\mathcal{L}_A\|$ is sufficiently small, $\varphi_l\left(\mathcal{L}_A\right)[Q]$ can be directly evaluated using either a polynomial or rational approximation. However, for large $\|\mathcal{L}_A\|$, this approach becomes impractical. In this section we introduce a scaling and recursive procedure for the computation of operator $\varphi$-functions. 
We begin by recalling a general formula for $\varphi$-functions \cite{Skaflestad}, with its proof to Lyapunov operators provided in \cite{Li2023}.

For a fixed $l\in \mathbb{N}$ , the result states that
\begin{equation*}\label{2.1}
(a+b)^l\varphi_l\left((a+b)\mathcal{L}_A\right)=a^l\varphi_0(b\mathcal{L}_A)\varphi_l(a\mathcal{L}_A)
+\sum\limits^{l}_{j=1}\frac{a^{l-j}b^{j}}{(l-j)!}\varphi_{j}(b\mathcal{L}_A),~~~a, b \in \mathbb{R}.
\end{equation*}
In particular, setting $a=k-1$ and $b=1$ for an integer $k\geq 2 $, we have
 \begin{eqnarray}\label{2.2}
\varphi_l\left(k\mathcal{L}_A\right)=(1-\frac{1}{k})^l\varphi_0(\mathcal{L}_A)\varphi_l\left((k-1)\mathcal{L}_A\right)+\sum\limits^{l}_{j=1}\mu_{k,j}\varphi_j(\mathcal{L}_A),
\end{eqnarray}
where
\begin{eqnarray*}\label{2.3}
\mu_{k,j}:=(1-\frac{1}{k})^l(\frac{1}{k-1})^j\frac{1}{(l-j)!}.
\end{eqnarray*}
The identity (\ref{2.2}) can be used as a starting in derivation of the scaling and recursive procedure for solving (\ref{1.2}).

Let $s$ be a non-negative integer; define  $X:=A/s$, and let $\mathcal{L}_{X}$ be the Lyapunov operator generated by matrix $X$. It follows that
$\mathcal{L}_{X}=\mathcal{L}_A/s$. Furthermore, we define
\begin{eqnarray}\label{2.3b}
&&C_k:=\sum\limits^{l}_{j=1}\mu_{k,j}\varphi_j(\mathcal{L}_{X})[Q],\\
&&\Phi_k:=\varphi_l(k\mathcal{L}_{X})[Q].
\end{eqnarray}
Then, starting from $\Phi_1=\varphi_l(\mathcal{L}_{X})[Q]$, $\varphi_l(\mathcal{L}_A)[Q]$ can be computed iteratively using the recurrence relation:
\begin{eqnarray}\label{2.4}
\Phi_k=(1-\frac{1}{k})^l\varphi_0(\mathcal{L}_{X})[\Phi_{k-1}]+C_k,~~k=2,3,\cdots,s.
\end{eqnarray}
To numerically implement the above recursion, the following steps are required: 

(i) Precompute $\varphi_j(\mathcal{L}_{X})[Q]$ for $j=1,2,\cdots,l$, and subsequently determine $\Phi_1$ and $C_k$ for $k=2,3,\cdots,s$; 

(ii) Develop a method to implement $\varphi_0(\mathcal{L}_{X})[\cdot]$ involved in (\ref{2.4}).

For the scaled operator $\mathcal{L}_{X}$, task (i) can be accomplished using polynomial approximations (e.g., Taylor, Chebyshev or Hermite approximations) or rational approximations. Methods based on rational approximations require solving algebraic Lyapunov matrix equations, which are generally computationally expensive for large-scale problems. For simplicity, we focus on the Taylor approximation in this work. 

First, we approximate $\varphi_l(\mathcal{L}_{X})[Q]$ using its truncated Taylor series of degree $m$, given by
\begin{eqnarray}\label{2.5}
\varphi_l(\mathcal{L}_{X})[Q]\approx\sum\limits^{m}_{k=0}\frac{1}{(k+l)!}{\mathcal{L}_{X}}^k[Q]\equiv T_{l,m}(\mathcal{L}_{X})[Q].
\end{eqnarray}
The operator polynomial $ T_{l,m}(\mathcal{L}_{X})[Q]$ can be computed through a series of Lyapunov operators, as described in Algorithm ~\ref{alg2.0}.
In practice, the scaling parameter $s$ should be chosen such that the norm of $\mathcal{L}_{X}=\mathcal{L}_{A}/s$ is sufficiently small. This ensures that  $\varphi_l(\mathcal{L}_{X})$ can be accurately approximated using its truncated Taylor series with an appropriatly chosen degree $m$. The detailed criteria for  selecting these two parameters will be discussed in the next subsection. Next, using the recursive relation (\ref{1.5}), we compute $\varphi_j(\mathcal{L}_{X})[Q]$ for $j=l-1,l-2,\ldots,1$ as follows:
\begin{equation}\label{2.6}
T_{j,m}(\mathcal{L}_{X})[Q]:=\mathcal{L}_{X}\left [T_{j+1,m}(\mathcal{L}_{X})[Q]\right]+\frac{1}{j!}Q,~ j=l-1,l-2,\cdots,1.
\end{equation}
Clearly, $T_{j,m}(\mathcal{L}_{X})$ represents the degree $m+l-j$ truncated Taylor series of $\varphi_j(\mathcal{L}_{X})$.
Once $T_{j,m}(\mathcal{L}_{X})[Q]$ is computed, the matrices $C_k$ in (\ref{2.3b}) can be obtained by
substituting $\varphi_j(\mathcal{L}_{X})[Q]$ with $T_{j,m}(\mathcal{L}_{X})[Q]$.
\begin{algorithm}
\caption{The recursive procedure for computing operator polynomial (\ref{2.5}). }
\label{alg2.0}
\begin{algorithmic}[1]
\REQUIRE{$\mathcal{L}_{X}$, $Q$, $m$ and $l$}
\STATE{Define $P:=\frac{1}{l!}Q$}
\STATE {Define $S:=P$}
\FOR{$k=1:m$}
\STATE  {Update $P:=\frac{1}{(l+k)}\mathcal{L}_{X}[P]$}
\STATE   {Update $S:=S+P$}
\ENDFOR
\ENSURE~$S$
\end{algorithmic}
\end{algorithm}

Task (ii) can be  accomplished by approximating $\varphi_0(\mathcal{L}_{X})$ using truncated Taylor series of degree $m+l$, given by
\begin{equation}\label{2.7}
 \varphi_0(\mathcal{L}_{X})\approx \sum\limits^{m+l}_{k=0}\frac{{\mathcal{L}_{X}}^k}{k!}\equiv T_{0,m}(\mathcal{L}_{X}).
\end{equation}

In the following, we use the notations $\widehat{\Phi}_k$ and $\widehat{C}_k$ to represent the 
approximations of $\Phi_k$ and $C_k$, respectively. Algorithm~\ref{alg2.1} outlines a general procedure for computing $\varphi_l(\mathcal{L}_{X})[Q]$.
The computational cost of the algorithm is primarily determined by the matrix-matrix multiplications. Specifically, the total number of such multiplications is $m$ when $s=1$, and $s(m+l)$ when $s>1.$

\begin{algorithm}
\caption{The scaling and recursive procedure for computing $\varphi_l(\mathcal{L}_A)[Q]$.}
\label{alg2.1}
\begin{algorithmic}[1]
\REQUIRE~{$A, Q \in \mathbb{R}^{N\times N},$ $l$}
\STATE {Select optimal values of $m$ and $s$}
\STATE  {Compute $X=\frac{1}{s}A$}
\STATE  {Compute $B_{l}=T_{l,m}(\mathcal{L}_{X})[Q]$ using Algorithm~\ref{alg2.0}}
\IF {$s=1$} \RETURN $\widehat{\Phi}_s=T_{l}$ \ENDIF 
\FOR{$k=l-1:-1:1$}
\STATE  {Compute $T_{k}=\mathcal{L}_{X}[B_{k+1}]+\frac{1}{k!}Q$}
\ENDFOR
\STATE  {Set $\widehat{\Phi}_1=B_{l}$}
\FOR{$k=2:s$}
\STATE  {Compute $\widehat{C}_k=\sum\limits^{l}_{j=1}\mu_{k,j}B_{j}$ with $\mu_{k,j}=(1-\frac{1}{k})^{l-j}(\frac{1}{k})^j\frac{1}{(l-j)!}$}
\STATE  {Compute $\widehat{\Phi}_k$ by the recurrence $\widehat{\Phi}_k=(1-\frac{1}{k})^lT_{0,m}(\mathcal{L}_{X})[\widehat{\Phi}_{k-1}]+\widehat{C}_k$}
\ENDFOR
\ENSURE{$\widehat{\Phi}_s$}
\end{algorithmic}
\end{algorithm}

\subsection{Choice of the parameters $m$ and $s$}
\label{sec:2.2}
Algorithm \ref{alg2.1} involves two key parameters: the polynomial degree $m$ of the polynomial $T_{l,m}(z)$
and the scaling parameter $s$, both of which must be chosen appropriately.
Following the backward error analysis developed in \cite{AlMohy2009,AlMohy2011,Higham2005},
we present a quasi-backward error analysis for Algorithm \ref{alg2.1}. This analysis  serves as the theoretical basis for determining parameters $m$ and $s.$
 
 Define the Lyapunov operator set
\begin{equation*}\label{2.8}
\Omega_m:=\{ \mathcal{L}:~~\|e^{-\mathcal{L}}T_{0,m}(\mathcal{L})-I_N \|<1,~\mathcal{L}\in \text{Lyap}(N)\}.
\end{equation*}
Note that
\begin{equation*}\label{2.8a}
e^{-\mathcal{L}}T_{0,m}(\mathcal{L})-I_N=-e^{-\mathcal{L}}\sum\limits^\infty_{k=m+l+1}\frac{1}{k!}{\mathcal{L}}^{k},
\end{equation*}
where $T_{0,m}(\cdot)$ is defined as in (\ref{2.7}).
Then, the operator function
\begin{equation}\label{2.9}
h_{m+l}(\mathcal{L}):=\log\left(e^{-\mathcal{L}}T_{0,m}(\mathcal{L})\right)
\end{equation}
exists over $\Omega_m$ and it has the Taylor series
\begin{equation*}\label{2.10}
h_{m+l}(\mathcal{L})=\sum\limits^\infty_{k=m+l+1}c_k{\mathcal{L}}^{k}.
\end{equation*}
Here, $\log$ denotes the principal logarithm  function. For a given fixed value of $m+l$, the polynomial $h_{m+l}(z)$ can be determined directly 
by expanding the logarithm function (\ref{2.9}) into a power series utilizing MATLAB's Symbolic Math Toolbox.
We now present a quasi-backward error analysis for Algorithm \ref{alg2.1}.

\begin{thm}\label{th1}Assume that $s^{-1}\mathcal{L}_A\in \Omega_m$ and the inverse of $\mathcal{L}_A$ exists. Then the approximation $\widehat{\Phi}_s$ generated by Algorithm \ref{alg2.1} satisfies
\begin{eqnarray*}\label{2.11}
\widehat{\Phi}_s={\mathcal{L}_A}^{-l}\left(e^{\mathcal{L}_A+\Delta \mathcal{L}_A}-\sum\limits^{l-1}_{j=0}{\mathcal{L}_A}^j/j!\right)[Q],
\end{eqnarray*}
where
\begin{equation*}
\Delta \mathcal{L}_A:=sh_{m+l}(s^{-1}\mathcal{L}_A).
\end{equation*}
\end{thm}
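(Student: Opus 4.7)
The plan is to rewrite the output of Algorithm \ref{alg2.1} in a closed form that matches (\ref{2.11}). Let $\widetilde{\mathcal{L}}:=\mathcal{L}_{A_s}=s^{-1}\mathcal{L}_A$ and set $\widehat{\mathcal{L}}:=\widetilde{\mathcal{L}}+h_{m+l}(\widetilde{\mathcal{L}})$, so that $s\widehat{\mathcal{L}}=\mathcal{L}_A+\Delta\mathcal{L}_A$. Since $h_{m+l}(\widetilde{\mathcal{L}})$ is a power series in $\widetilde{\mathcal{L}}$ the two operators commute, and taking exponentials in (\ref{2.9}) will give $T_{0,m}^{(l)}(\widetilde{\mathcal{L}})=e^{\widetilde{\mathcal{L}}}e^{h_{m+l}(\widetilde{\mathcal{L}})}=e^{\widehat{\mathcal{L}}}$. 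Unrolling the backward recursion (\ref{2.6}) then yields
\[
T_{j,m}^{(l)}(\widetilde{\mathcal{L}})=\widetilde{\mathcal{L}}^{-j}\Bigl(T_{0,m}^{(l)}(\widetilde{\mathcal{L}})-\sum_{i=0}^{j-1}\widetilde{\mathcal{L}}^i/i!\Bigr)=\widetilde{\mathcal{L}}^{-j}\Bigl(e^{\widehat{\mathcal{L}}}-\sum_{i=0}^{j-1}\widetilde{\mathcal{L}}^i/i!\Bigr),\qquad j=1,\dots,l,
\]
i.e.\ the formal analogue of (\ref{1.6}) in which $\widetilde{\mathcal{L}}$ sits in the polynomial part but $\widehat{\mathcal{L}}$ carries the exponent.

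My next step is to prove by induction on $k=1,\dots,s$ that
\[
\Phi_k^{(l)}=\widehat{\varphi}_l(k\widetilde{\mathcal{L}})[Q],\qquad \widehat{\varphi}_j(c\widetilde{\mathcal{L}}):=(c\widetilde{\mathcal{L}})^{-j}\Bigl(e^{c\widehat{\mathcal{L}}}-\sum_{i=0}^{j-1}(c\widetilde{\mathcal{L}})^i/i!\Bigr).
\]
The base case $k=1$ will be immediate from the display above, since $\Phi_1^{(l)}=B_l=T_{l,m}^{(l)}(\widetilde{\mathcal{L}})[Q]=\widehat{\varphi}_l(\widetilde{\mathcal{L}})[Q]$. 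For the inductive step, substituting $T_{0,m}^{(l)}(\widetilde{\mathcal{L}})=\widehat{\varphi}_0(\widetilde{\mathcal{L}})$ and $T_{j,m}^{(l)}(\widetilde{\mathcal{L}})=\widehat{\varphi}_j(\widetilde{\mathcal{L}})$ into the recurrence of Algorithm \ref{alg2.1} and applying the induction hypothesis turns it into
\[
\Phi_k^{(l)}=(1-1/k)^l\,\widehat{\varphi}_0(\widetilde{\mathcal{L}})\,\widehat{\varphi}_l((k-1)\widetilde{\mathcal{L}})[Q]+\sum_{j=1}^l\mu_{k,j}^{(l)}\,\widehat{\varphi}_j(\widetilde{\mathcal{L}})[Q],
\]
which is exactly the form of (\ref{2.2}) with each $\varphi_j$ replaced by $\widehat{\varphi}_j$.

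The main obstacle is thus to verify this \emph{quasi} version of (\ref{2.1}), namely
\[
(a+b)^l\widehat{\varphi}_l((a+b)\widetilde{\mathcal{L}})=a^l\widehat{\varphi}_0(b\widetilde{\mathcal{L}})\widehat{\varphi}_l(a\widetilde{\mathcal{L}})+\sum_{j=1}^l\tfrac{a^{l-j}b^j}{(l-j)!}\widehat{\varphi}_j(b\widetilde{\mathcal{L}}).
\]
It will hold because the known derivation of (\ref{2.1}) only uses (i) the closed form $\varphi_l(t)=t^{-l}(e^t-\sum_{i=0}^{l-1}t^i/i!)$, (ii) $e^{at}e^{bt}=e^{(a+b)t}$, and (iii) the binomial expansion $(a+b)^k=\sum_{p+i=k}a^p b^i$; all three steps survive intact under the substitution $e^{ct}\mapsto e^{c\widehat{\mathcal{L}}}$, $t^i\mapsto\widetilde{\mathcal{L}}^i$ because $\widetilde{\mathcal{L}}$ and $\widehat{\mathcal{L}}$ commute. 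Specialising $a=k-1,\,b=1$ closes the induction, and evaluating at $k=s$ will give
\[
\Phi_s^{(l)}=(s\widetilde{\mathcal{L}})^{-l}\Bigl(e^{s\widehat{\mathcal{L}}}-\sum_{j=0}^{l-1}(s\widetilde{\mathcal{L}})^j/j!\Bigr)[Q]=\mathcal{L}_A^{-l}\Bigl(e^{\mathcal{L}_A+\Delta\mathcal{L}_A}-\sum_{j=0}^{l-1}\mathcal{L}_A^j/j!\Bigr)[Q],
\]
which is (\ref{2.11}).
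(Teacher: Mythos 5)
Your proposal is correct and follows essentially the same route as the paper: the same finite induction on $k$, the same identification $T_{0,m}^{(l)}(\mathcal{L}_{A_s})=e^{\mathcal{L}_{A_s}+h_{m+l}(\mathcal{L}_{A_s})}$ with the unrolled form of $T_{j,m}^{(l)}$, and the same algebraic verification in the inductive step. The only difference is presentational: you package the inductive step as a ``quasi'' version of the general identity (\ref{2.1}) for arbitrary $a,b$, whereas the paper carries out exactly that computation inline for the special case $a=k-1$, $b=1$ in (\ref{2.15}).
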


\begin{proof} We prove the claim for $\widehat{\Phi}_k$ by finite induction over $k\in \{1,2,\ldots,s\}.$
From (\ref{2.9}) it follows that
\begin{equation*}\label{2.12}
T_{0,m}(\mathcal{L}_{X})=e^{\mathcal{L}_{X}+h_{m+l}(\mathcal{L}_{X})},
\end{equation*}
where $\mathcal{L}_{X}=s^{-1}\mathcal{L}_A.$ Furthermore, using (\ref{2.6}) we obtain
\begin{equation*}\label{2.13}
T_{i,m}(\mathcal{L}_{X})=\mathcal{L}_{X}^{-i}\left(e^{\mathcal{L}_{X}+h_{m+l}(\mathcal{L}_{X})}-\sum\limits^{i-1}_{j=0}{\mathcal{L}_{X}}^j/j!\right),~i=1,2,\cdots,l.
\end{equation*}
Notice that $\widehat{\Phi}_{1}=T_{l,m}(\mathcal{L}_{X})[Q].$ This gives the base case.

Now we assume that
\begin{equation*}\label{2.14}
\widehat{\Phi}_{k-1}=\left((k-1)\mathcal{L}_{X}\right)^{-l}\left(e^{(k-1)(\mathcal{L}_{X}+h_{m+l}(\mathcal{L}_{X}))}-\sum\limits^{l-1}_{j=0}(k-1)^j{\mathcal{L}_{X}}^j/j!\right)[Q],~~k\geq2.
\end{equation*}
The inductive step follows from

\begin{equation}\label{2.15}
\begin{aligned}
\widehat{\Phi}_k&=(1-\frac{1}{k})^lT_{0,m}(\mathcal{L}_{X})[\widehat{\Phi}_{k-1}]+\widehat{C}_k\\
&=\left(k\mathcal{L}_{X}\right)^{-l}\left(e^{k(\mathcal{L}_{X}+h_{m+l}(\mathcal{L}_{X}))}-e^{\mathcal{L}_{X}+h_{m+l}(\mathcal{L}_{X})}\sum\limits^{l-1}_{j=0}(k-1)^j{\mathcal{L}_{X}}^j/j!\right)[Q]\\
&+\sum\limits^{l}_{j=1}\mu_{k,j}{\mathcal{L}_{X}}^{-j}\left(e^{\mathcal{L}_{X}+h_{m+l}(\mathcal{L}_{X})}-\sum\limits^{j-1}_{i=0}{\mathcal{L}_{X}}^i/i!\right)[Q]\nonumber\\
&=\left(k\mathcal{L}_{X}\right)^{-l}\left(e^{k(\mathcal{L}_{X}+h_{m+l}(\mathcal{L}_{X}))}-e^{\mathcal{L}_{X}+h_{m+l}(\mathcal{L}_{X})}\sum\limits^{l-1}_{j=0}\frac{(k-1)^j}{j!}{\mathcal{L}_{X}}^j\right.\\
&+\left.\sum\limits^{l}_{j=1}\frac{(k-1)^{l-j}}{(l-j)!}{\mathcal{L}_{X}}^{l-j}\left(e^{\mathcal{L}_{X}+h_{m+l}(\mathcal{L}_{X})}-\sum\limits^{j-1}_{i=0}{\mathcal{L}_{X}}^i/i!\right)\right)[Q]\\
&=\left(k\mathcal{L}_{X}\right)^{-l}\left(e^{k(\mathcal{L}_{X}+h_{m+l}(\mathcal{L}_{X}))}
-\sum\limits^{l}_{j=1}\sum\limits^{j-1}_{i=0}\frac{(k-1)^{l-j}}{(l-j)!i!}{\mathcal{L}_{X}}^{l+i-j}\right)[Q]\\
&=(k\mathcal{L}_{X})^{-l}\left(e^{k\left(\mathcal{L}_{X}+h_{m+l}(\mathcal{L}_{X})\right)}
-\sum\limits^{l-1}_{j=0}\sum\limits^{j}_{i=0}\frac{(k-1)^{j-i}}{(j-i)!i!}{\mathcal{L}_{X}}^j\right)[Q]\\
&=(k\mathcal{L}_{X})^{-l}\left(e^{k\left(\mathcal{L}_{X}+h_{m+l}(\mathcal{L}_{X})\right)}-\sum\limits^{l-1}_{j=0}(k\mathcal{L}_{X})^j/j!\right)[Q].
\end{aligned}
\end{equation}
The result now follows by taking $k=s$ and replacing $\mathcal{L}_{X}$ by $\mathcal{L}_A/s$.
\end{proof}

Theorem~\ref{th1} shows that the approximation produced by Algorithm~\ref{alg2.1} can be regarded as a perturbation of $\mathcal{L}_A$ in the operator exponential given in (\ref{1.6}). In particular, when $l=0$, Theorem~\ref{th1} becomes a backward error analysis for computing $\text{exp}(\mathcal{L}_{X})[Q]$.

Given a tolerance \texttt{Tol}, we aim to select appropriate parameters $m$ and $s$ such that
\begin{equation}\label{2.16}
\frac{\|\Delta \mathcal{L}_A\|}{\|\mathcal{L}_A\|}=\frac{\|h_{m+l}(s^{-1}\mathcal{L}_A)\|}{\|s^{-1}\mathcal{L}_A\|}\leq\texttt{Tol}.
\end{equation}
Define the function
\begin{equation*}\label{2.17a}
\bar{h}_{m+l}(x):=\sum\limits^\infty_{k=m+l}|c_{k+1}|x^{k},
\end{equation*}
and let
\begin{equation*}\label{2.17}
\theta_{m+l}=\max{\{\theta:{\bar{h}_{m+l}(\theta)}\leq \texttt{Tol}\}}.
\end{equation*}
Then, once we choose $s$ such that
\begin{equation*}\label{2.18}
s^{-1}\|{\mathcal{L}_A}^{k}\|^{1/k} \leq \theta_{m+l}~~\text{for}~~k\geq m+l,
\end{equation*}
the relative quasi-backward error (\ref{2.16}) satisfies
\begin{equation*}\label{2.19}
\frac{\|\Delta \mathcal{L}_A\|}{\|\mathcal{L}_A\|}\leq \bar{h}_{m+l}(\theta_{m+l}) \leq \texttt{Tol}.
\end{equation*}
In practice, we compute $\theta_{m+l}$ by replacing $\bar{h}_{m+l}(x)$ with its truncated $\nu$-terms series and solving numerically the corresponding algebra equation
\begin{equation*}\label{2.17b}
\sum\limits^{\nu+m+l}_{k=m+l}|c_{k+1}|x^{k}=\texttt{Tol}.
\end{equation*}
Table~\ref{tab2.1} lists the evaluations of $\theta_{m+l}$ for some $m+l$ when $\nu=150$ and $\texttt{Tol}=2^{-53}$.
\begin{table}[h]
\caption{The values of $\theta_{m+l}$ for selected $m+l$ when $\texttt{Tol}=2^{-53}$.}\label{tab2.1}%
\begin{tabular}{@{}l|lllllllllll@{}}
\toprule
$m+l$ & $5$& $10$& $15$& $20$& $25$& $30$& $35$ & $40$ & $45$ & $50$ & $55$\\
\midrule
$\theta_{m+l}$ & $2.40\text{e-}3$ & $1.44\text{e-}1$ & $6.41\text{e-}1$ &$1.44\text{e}0$&$2.43\text{e}0$
  &$3.54\text{e}0$ &$4.73\text{e}0$&$5.97\text{e}0$ &$7.25\text{e}0$&$8.55\text{e}0$ &$9.87\text{e}0$ \\
\bottomrule
\end{tabular}
\end{table}

To determine the range of $s$,  it is necessary to establish an upper bound for $\|\mathcal{L}_A^{k}\|^{1/k}$. As shown in \cite{Li2023},  two approaches are available.
The first approach involves bounding the norm of $\mathcal{L}_A^{k}$ using the spectral radius of $\mathcal{L}_A$. For any $\epsilon>0$,
there exists a consistent norm $\|\cdot\|_\epsilon$ such that $\|\mathcal{L}_A\|_\epsilon \leq 2\rho (A)+\epsilon.$ This result implies
\begin{equation*}\label{2.20}
\|{\mathcal{L}_A}^k\|_\epsilon^{1/k} \leq 2\rho (A)+\epsilon.
\end{equation*}
We then set  $s=\max \{1, \lceil (2\rho (A)+\epsilon)/\theta_{m+l}~\rceil\}.$ 
The drawback of this approach is the challenge in representing such a norm.

The second approach is applicable to any consistent matrix norm. using the techniques outlined in the proof of Theorem 4.2 in \cite{AlMohy2009}, 
it can be readily verified that
\begin{equation*}\label{2.23}
\|{\mathcal{L}_A}^k\|^{1/k} \leq \alpha_p(\mathcal{L}_A): =\max\left(\|{\mathcal{L}_A}^{p}\|^{1/p},\|{\mathcal{L}_A}^{p+1}\|^{1/(p+1)}\right)~\text{for}~~p(p-1)\leq m+l\leq k.
\end{equation*}
The approach requires evaluating $\|{\mathcal{L}_A}^{p}\|^{1/p}$, $\|{\mathcal{L}_A}^{p+1}\|^{1/(p+1)}$ for $p(p-1)\leq m+l$.
Utilizing the binomial theorem, it can be shown that
\begin{equation}\label{2.28}
{\mathcal{L}_A}^p[X]=\sum\limits^{p}_{j=0}\binom{p}{j}A^jX(A^{p-j})^T,
\end{equation}
where $\binom{p}{j}=\frac{p!}{j!(p-j)!}$. Since $\sum\limits^{p}_{j=0} \binom{p}{j}=2$, we obtain
\begin{equation*}\label{2.29}
\|{\mathcal{L}_A}^p\|^{1/p}\leq d_p:=2\max\{\|A^j\|^{1/p}\cdot\|A^{p-j}\|^{1/p},~~ j=0,1,\ldots, p\}.
\end{equation*}
In this case, the scaling factor $s$ is naturally set to
\begin{equation*}\label{2.26}
s = \max\{1, \lceil \alpha_p(\mathcal{L}_A)/\theta_{m+l}\rceil\}.
\end{equation*}

In practice, specific values of $m$ and $p$ would be chosen to minimize the computational cost, satisfying
\begin{equation*}\label{2.27}
[m_*,p_*]=\arg \min\limits_{m,p}\{s(m+l):~~2\leq p\leq p_{\max}, ~~p(p-1)\leq m+l \leq m_{\max}\},
\end{equation*}
where $m_{\max}$ is the maximum allowable value of $m+l$, and $p_{\max}$ is the largest value of $p$ for which $p(p-1)\leq m_{\max}$.
For practical implementation, we set $m_{\max}=55$ and $p_{\max}=7.$
The procedure for selecting these values is outlined in Algorithm~\ref{alg2.2}. 
We use the 1-norm to evaluate the powers of $A$, which can be evaluated using the block 1-norm estimation algorithm \cite{Higham00}.

\begin{algorithm}[htb]
\caption{$\texttt{select\underline{~}m\underline{~}s}$: this algorithm computes the parameters $m$ and $s$.}\label{alg2.2}
\begin{algorithmic}[1]
\REQUIRE{$A\in \mathbb{R}^{N\times N},$ $l$, $m_{\max}$ and $p_{\max}$}
\FOR {$p = 2:p_{\max}+1$}
\STATE {Estimate $d_p =\max{\left\{\|A^k\|_1\cdot\|A^{p-k}\|_1,~k=0,1,\ldots,p\right\}}$}
\ENDFOR
\STATE {Compute $\alpha_p=2\max (d_p^{1/p}, ~d_{p+1}^{1/{(p+1)}}),$~$p=1,2,\cdots,p_{\max}$}\vspace{1ex}
\STATE {Compute  $[m_*,p_*]=\arg\min\limits_{m,p}\{(m+l)\lceil\alpha _p/\theta_{m+l}\rceil:~~ p(p-1)\leq m+l \leq m_{\max}\}$}\vspace{1ex}
\STATE {Set $m:=m_*$, $s:=\max(\lceil\alpha _{p_*}/\theta_{m_*+l}\rceil,1)$}
\ENSURE{$m,~s$}
\end{algorithmic}
\end{algorithm}

\section{low-rank implementation} \label{sec:3}
We now investigate the low-rank variant of Algorithm~\ref{alg2.1},
 motivated by  the frequently observed small (numerical) rank of $\varphi_l(\mathcal{L}_A)[Q]$,
given that $Q$ has a low-rank factorization of the form
 \begin{equation*}\label{3.0}
 Q=LDL^T, ~L\in \mathbb{R}^{N\times r}, D\in \mathbb{R}^{r\times r}, r\ll N.
 \end{equation*}
Utilizing the sparsity of $A$ and the low-rank structure of $Q$, we can efficiently implement
Algorithm \ref{alg2.1} in a low-rank approach, thereby reducing computational cost.
We will follow the notations introduced in Section~\ref{sec:2}.

To construct  a low-rank approximation, we need to provide low-rank representations of $B_k$, $\widehat{C}_k$ and $\widehat{\Phi}_k$
within Algorithm~\ref{alg2.1}, respectively. In considering how to compute the low-rank representation of $B_k$, we first state a result for the $LDL^T$ factorization of general Lyapunov operator polynomials.

\begin{lem}\label{lem1} Let $P_m(x)=\sum\limits^{m}_{k=0}a_kx^k,$ $\mathcal{L}_A\in \text{Lyap}(N)$~and~$Q=LDL^T\in \mathbb{R}^{N\times r}$
with $L\in \mathbb{R}^{N\times r}$ and~$D\in \mathbb{R}^{r\times r}.$  Then $P_m(\mathcal{L}_A)[Q]$ can be represented in the form of $LDL^T$-type as:
\begin{equation}\label{3.1}
P_m(\mathcal{L}_A)[Q]= \widetilde{L}(\Gamma\otimes D)\widetilde{L}^T,
\end{equation}
where $\widetilde{L}:= [L,AL,A^2L\ldots,A^mL]\in \mathbb{R}^{N\times r(m+1)}$, and $\Gamma \in \mathbb{R}^{(m+1)\times (m+1)}$ is a symmetric matrix, whose elements are zero below the first anti-diagonal and its non-zero entries are $\gamma_{i+1,j+1}=a_{i+j}\binom{i+j}{i}$ for $0\leq i+j \leq m, ~ i, j=0,1,\ldots,m$.
\end{lem}
\begin{proof}
Substituting the expression (\ref{2.28}) into the operator polynomial function $P_m(\mathcal{L}_A)[Q]$ and setting $a_k:=0$ for $k=m+1,m+2,\ldots,2m$, we have
\begin{eqnarray*}\label{3.2}
\begin{array}{ll}
P_m(\mathcal{L}_A)[Q]&=\sum\limits^{m}_{k=0}a_k{\mathcal{L}_A}^k[LDL^T]\vspace{1ex}\\
&=\sum\limits^{m}_{k=0}\sum\limits^{k}_{i=0}a_k\binom{k}{i}A^iLDL^T(A^T)^{k-i}\vspace{1ex}\\
&=\sum\limits^{m}_{i=0}\sum\limits^{m-i}_{j=0}a_{i+j}\binom{i+j}{i}A^iLD(A^jL)^T\vspace{1ex}.
\end{array}
\end{eqnarray*}
This directly yields (\ref{3.1}).
\end{proof}

Applying Lemma ~\ref{lem1} to $B_l=T_{l,m}(\mathcal{L}_{X})[LDL^T]$, the decomposition $\widetilde{L}_l\widetilde{D}_l{\widetilde{L}_l}^T$ is  given by the factors
\begin{equation*}\label{3.4}
\widetilde{L}_l= [L, XL, X^2L,\ldots, X^mL]~~ \text{and}~~ \widetilde{D}_l=(\Gamma\otimes D),
\end{equation*}
where the  entries of $\Gamma$ are:
\begin{equation}\label{3.5}
\gamma_{i+1,j+1} :=
\begin{cases}
\frac{(i+j)!}{i!j!(l+i+j)!}, &0\leq i+j \leq m,\\
0, & \text{otherwise}.
\end{cases}
\end{equation}
The column size of $\widetilde{L}_l$ is $r(m+1)$, which grows linearly with respect to the value
of $m$ and may far larger than the rank of $B_l$. This will produce additional costs in subsequent calculations.
Therefore, it is necessary to embed a column-compression strategy to eliminate the redundant information of $\widetilde{L}_l$ and $\widetilde{D}_l$,
as discussed in \cite[Section 3.4]{Lang15}.

Utilizing the recursion (\ref{2.6}) and starting with the low-rank factors $\widetilde{L}_l$ and $\widetilde{D}_l$ of $B_l$ ,
the low-rank factors $\widetilde{L}_k$, $\widetilde{D}_k$ of $B_k$ for $k=l-1,l-2,\ldots,1$, are given recursively by
\begin{equation*}\label{3.7}
\widetilde{L}_k= \begin{pmatrix}L,\widetilde{L}_{k+1},X\widetilde{L}_{k+1}\end{pmatrix}
\quad\text{and}\quad
\widetilde{D}_k=\begin{pmatrix}
\frac{1}{k!}D & 0& 0\\
0 & 0& \widetilde{D}_{k+1}\\
0 & \widetilde{D}_{k+1}& 0\\
\end{pmatrix}.
\end{equation*}
Collecting the above expressions, we obtain
\begin{eqnarray*}\label{3.8}
\widehat{C}_k=\sum\limits^{l}_{j=1}\mu_{k,j}B_j=G_kS_kG_k^T
\end{eqnarray*}
with
\begin{eqnarray*}\label{3.9}
G_k=[L_1, L_2, \cdots, L_l] ~~\text{and}~~S_k=\text{blkdiag}(\mu_{k,1}D_1, \mu_{k,2}D_2, \cdots, \mu_{k,l}D_l).
\end{eqnarray*}
In our computation, once the new splitting factors are formed, column-compression strategy will be performed to eliminate the redundant information.

The next step is to derive a low-rank approximation for $\widehat{\Phi}_k$.
From Line 13 of Algorithm~\ref{alg2.1}, we have observed that the key ingredient for the factorization
 is to form a low-rank approximation to $(1-\frac{1}{k})^lT_{0,m}(\mathcal{L}_{X})[\widehat{\Phi}_{k-1}].$
Assume the previous approximation $\widehat{\Phi}_{k-1}$
 admits a decomposition of the form
\begin{eqnarray*}\label{3.10}
\widehat{\Phi}_{k-1} = L_{k-1}D_{k-1}L_{k-1}^T
\end{eqnarray*}
with $L_{k-1}\in \mathbb{R}^{N\times r_{k-1}}, D_{k-1}\in \mathbb{R}^{r_{k-1}\times r_{k-1}}$.
Since $T_{0,m}(\mathcal{L}_{X})$ is the order $m+l$ truncated Taylor series of $e^{\mathcal{L}_{X}}$,
again, we can apply Lemma~\ref{lem1} to obtain the $LDL^T$-based factorization of $T_{0,m}(\mathcal{L}_{X})[\widehat{\Phi}_{k-1}]$.
In this case, the computational cost would include using a column-compression strategy in each recursive step
to eliminate the redundant information of the generated splitting factors. This may be very costly if the value of the scaling parameter $s$ is too large. Instead of approximating $\varphi_0(\mathcal{L}_{X})[\widehat{\Phi}_{k-1}]$
by $T_{0,m}(\mathcal{L}_{X})[\widehat{\Phi}_{k-1}]$, we consider an alternative way based on the identity
\begin{equation*}\label{3.11}
\varphi_0(\mathcal{L}_{X})[\widehat{\Phi}_{k-1}]=e^{X}\widehat{\Phi}_{k-1}e^{X^T}.
\end{equation*}
By approximating $e^{X}$ by the order $m+l$ truncated Taylor series $T_{0,m}(X),$ we can
evaluate $\varphi_0(\mathcal{L}_{X})[\widehat{\Phi}_{k-1}]$ in a low-rank format
\begin{equation*}\label{3.12}
(1-\frac{1}{k})^l\varphi_0(\mathcal{L}_{X})[\widehat{\Phi}_{k-1}]\approx \widetilde{G}_k\widetilde{S}_k\widetilde{G}_k^T,
\end{equation*}
where $\widetilde{G}_k=T_{0,m}(X)L_{k-1}$, $\widetilde{S}_k=(1-\frac{1}{k})^lD_{k-1}$.
This approach not only saves $s-1$ column compressions but also can yield more accurate results.
Finally, the splitting factors $L_k$, $D_k$ of $\widehat{\Phi}_k$ can be computed as follows:
\begin{equation*}\label{3.12a}
L_k=\left[\widetilde{G}_k, G_k \right]~\text{and}~D_k=\text{blkdiag}\left(\widetilde{S}_k,S_k\right).
\end{equation*}

The entire procedure for obtaining a low-rank approximation of $\varphi_l(\mathcal{L}_{X})[LDL^T]$ is given in Algorithm~\ref{alg3}. The computational cost of the method is dominated by the multiplications of matrices and block vectors, making it competitive for large and sparse matrix $A$.

\begin{algorithm}[h]
\caption{~$\texttt{phi\underline{~}lyap\underline{~}ldl}$:~this algorithm computes low-rank approximation of $\varphi_l(\mathcal{L}_A)[LDL^T]$.}\label{alg3}
\begin{algorithmic}[1]
\REQUIRE~$A\in \mathbb{R}^{N\times N},$ $L\in \mathbb{R}^{N\times r},$ $D\in \mathbb{C}^{r\times r},$ $l$
\STATE Call Algorithm~\ref{alg2.2} to determine the values of $m$ and $s$
\STATE $X=s^{-1}A;$
\STATE Form $\widetilde{L}_l= [L, XL, \frac{1}{2!}X^2L,\ldots,\frac{1}{m!}X^mL]$ and $\widetilde{D}_l=(\Gamma\otimes D),$ where $\Gamma$ is as defined in (\ref{3.5})
\STATE Column-compress $\widetilde{L}_l$ and $\widetilde{D}_l$
\IF {$s=1$} \RETURN $\widetilde{L}_l,$ $\widetilde{D}_l;$  \ENDIF
\FOR{$k=l-1$ to $1$}
\STATE $\widetilde{L}_k= [L,\widetilde{L}_{k+1},X\widetilde{L}_{k+1}]$
\STATE $\widetilde{D}_k=\left(\begin{tabular}{cccccc}
$\frac{1}{k!}D$ & $0$& $0$\\
$0$ & $0$& $\widetilde{D}_{k+1}$\\
$0$ & $ \widetilde{D}_{k+1}$& $0$\\
\end{tabular}%
\right)$
\STATE Column-compress $\widetilde{L}_k$ and $\widetilde{D}_k$
\ENDFOR
\STATE $L_1=\widetilde{L}_l$,~~$D_1=\widetilde{D}_l$
\FOR{$k=2:s$}
\STATE Compute $\mu_{k,j}=(1-\frac{1}{k})^{l-j}(\frac{1}{k})^j\frac{1}{(l-j)!},$ $j=1,2,\ldots,l$
\STATE Form $G_k=[\widetilde{L}_1,\widetilde{L}_2,\ldots,\widetilde{L}_l]$ and $S_k=\text{blkdiag}(\mu_{k,1}\widetilde{D}_1,\mu_{k,2}\widetilde{D}_2,\ldots,\mu_{k,l}\widetilde{D}_l)$
\STATE Compute $\widetilde{G}_k=T_{0,m}(X)L_{k-1}$ and $\widetilde{S}_k=(1-\frac{1}{k})^l D_{k-1}$
\STATE Form $L_k=[\widetilde{G}_k,G_k]$ and $D_k=\text{blkdiag}\left(\widetilde{S}_k,S_k\right)$
\STATE Column-compress $L_k$ and $D_k$
\ENDFOR
\ENSURE~$L_s$ and $D_s$
\end{algorithmic}
\end{algorithm}

\section{Numerical experiments}\label{sec:4}

In this section, we perform some numerical experiments to illustrate that the method described in Algorithm~\ref{alg3} ($\texttt{phi\underline{~}lyap\underline{~}ldl}$) can serve as the basis of a low-rank matrix-valued exponential integrator for solving DLEs and DREs. 
All the tests are conducted using MATLAB R2020b running on a desktop equipped with Intel Core i7 processor operating at 2.1GHz and 64GB of RAM.
The relative error at the time $t$ is measured in the Frobenius norm, defined as:
\begin{equation*}\label{5.1}
Error= \frac{\|\widehat{Y}_t-Y_t\|_F}{\|Y_t\|_F},
\end{equation*}
where $\widehat{Y}_t$ and $Y_t$ are the computed solution and the reference solution at the time $t$, respectively.

\begin{example} \label{exa1a} 
Consider the two-dimensional heat equation (HE) with a Gaussian source term:
\begin{equation}\label{5.2a}
u_t= \alpha\Delta u+ \text{exp}{\left(-\frac{(x-\mu)^2+(y-\mu)^2}{2\sigma^2}\right)},  \quad (x, y)\in \Omega=[0, d]^2, \quad t>0
\end{equation}
 with Dirichlet boundary conditions $u = 0$ on $\partial \Omega$. We choose initial condition 
 \begin{equation*}\label{5.2b}
u(x,y,0)=\sin(\pi x) \sin(\pi y).
\end{equation*}
\end{example} 

We discretize  Eq. (\ref{5.2a}) in space by standard finite differences, employing $N$ interior grid points in each direction. This results in a mesh size of $h=\frac{1}{N+1}$ and produces the following  differential Lyapunov equations (DLEs):
\begin{equation}\label{5.2c}
\left\{
\begin{array}{l}
U'(t)=AU(t)+U(t)A^T+BB^T,\\
U(0)=L_0L_0^T,
\end{array}
\right.
\end{equation}
where
 $A= \alpha(n+1)^2\text{tridiag}(1,-2,1)\in \mathbb{R}^{N\times N}$, $B, L_0\in  \mathbb{R}^{N}$ with $[B]_i=\text{exp}{\left(-\frac{(ih-\mu)^2}{2\sigma^2}\right)}$ and  
$[L_0]_i=\sin(\pi ih)$, for $i=1,2,\ldots,N$.

The solution to the DLEs (\ref{5.2c}) at time $t$ is exactly represented using the \texttt{mExpeul} scheme:
\begin{equation*}
U(t)=e^{t\mathcal{L}_A}[L_0L_0^T]+t\varphi_1(t\mathcal{L}_A)[BB^T]=U(t_0)+t\varphi_1(t\mathcal{L}_A)[F(U(t_0))].
\end{equation*}
The scheme is implemented by first decomposing the term $F(U(t_0))$ into an $LDL^T$-type factorization:
\begin{eqnarray*}
\begin{array}{llll}
F(U(t_0)) &=AU_0+U_0A^T-BB^T\\
&=[L_0, AL_0, B]\left(\begin{tabular}{cccccc}
$0$ & $I$& $0$\\
$I$ & $0$& $0$\\
$0$ & $0$& $I$\\
\end{tabular}%
\right)[L_0, AL_0, B]^T\\
&=:L_fD_fL_f^T.\\
\end{array}
\end{eqnarray*}
Next, the $\texttt{phi\underline{~}lyap\underline{~}ldl}$ function is applied
to compute the $LDL^T$ factorization of $t\varphi_1(t\mathcal{L}_A)[L_fD_fL_f^T]$.
Finally, the low-rank factors of $X(t)$ are formed by combing the two terms on right hand side. The resulting algorithm for the \texttt{mExpeul} is given
in Algorithm \ref{mExpeul}.
\begin{algorithm}[H]
\caption{~Low-rank implementation of \texttt{mExpeul} scheme for DLEs (\ref{5.2c})} \label{mExpeul}
\begin{algorithmic}[1]
\REQUIRE ~$A\in \mathbb{R}^{N\times N}$, $B\in \mathbb{R}^{ N\times q}$, $L_0\in \mathbb{R}^{ N\times r},$ $t$.
\STATE Form $L_f=[L_0, AL_0, B]$ and $D_f=\left(\begin{tabular}{cccccc}
$0$ & $I_r$& $0$\\
$I_r$ & $0$& $0$\\
$0$ & $0$& $I_q$\\
\end{tabular}%
\right).$ 
\STATE Column-compress $L_f$ and $D_f$.
\STATE Compute the low-rank factors  $\tilde{L}_f$, $\tilde{D}_f$ of $\varphi_1(t\mathcal{L}_A)[L_fD_fL_f^T]$ using Algorithm \ref{alg3}.
\STATE Form $L_t=[L_0,\tilde{L}_f]$ and $D_t=\text{blkdiag}(I_r, t\tilde{D}_f)$.
\STATE Column-compress $L_t$ and $D_t$.
\ENSURE~$L_t$ and $D_t$.
\end{algorithmic}
\end{algorithm}

We integrate the problem (\ref{5.2c}) with the parameter values $\alpha =0.02$, $\mu= 5$, $\sigma=1$, $d=10$,
and $N=1000$. In our implementation, the compression accuracy of \texttt{mExpeul} is set to $100\cdot eps$. 
 Fig. \ref{fig5.1a} (a)-(d) presents the solution obtained with \texttt{mExpeul} and its difference from the reference solution. The results for $t=1$ are shown on the left, and those for $t=5$ are on the right. The reference solution is computed by rewriting (\ref{5.2c}) as a vector ODEs of dimension $N^2$ using the  Kronecker product and then solving it with the vector-valued exponential Euler scheme (denoted by \texttt{vExpeul}) \cite{Hochbruck2010}. The implementation of \texttt{vExpeul} requires the evaluation of the matrix $\varphi$-function, which is performed by krylov-based method \texttt{phipm} \cite{Niesen2012}. The relative errors of \texttt{mExpeul} with respect to the  reference solution are 2.4571\text{e}-14 at $t=1$ and 4.6354\text{e}-13 at $t=5$. The computation times for \texttt{mExpeul} are 0.40 seconds at $t=1$ and 2.03 seconds at $t=5$, whereas  \texttt{vExpeul} requires 9.11 seconds at $t=1$ and 36.81 seconds at $t=5$.

\begin{figure}[H]
\begin{minipage}{0.5\linewidth}
\centering
\includegraphics[width=6.5cm,height=5cm]{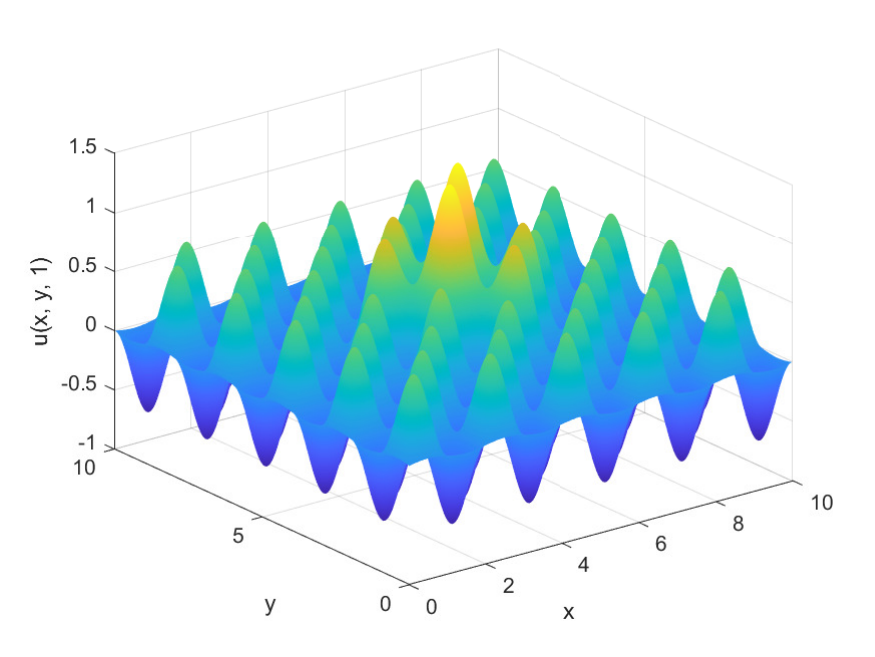}\\
\footnotesize{Solution of \texttt{mExpeul} at $t=1$}
\end{minipage}
\mbox{\hspace{-1.5cm}}
\begin{minipage}{0.5\linewidth}
\centering
\includegraphics[width=6.5cm,height=5cm]{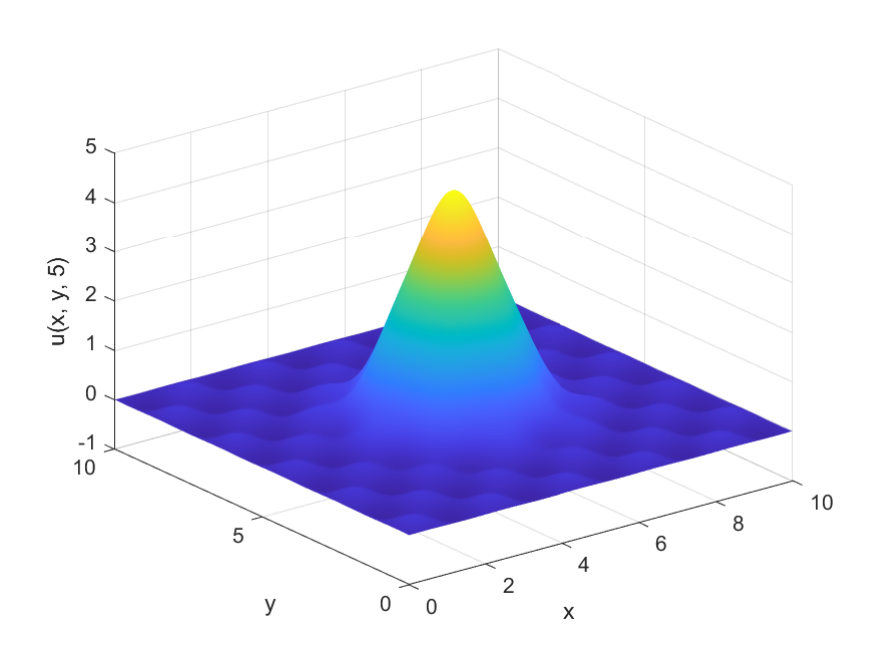}\\
\footnotesize{Solution of \texttt{mExpeul} at $t=5$ }
\end{minipage}\\
\begin{minipage}{0.5\linewidth}
\centering
\includegraphics[width=6.5cm,height=5cm]{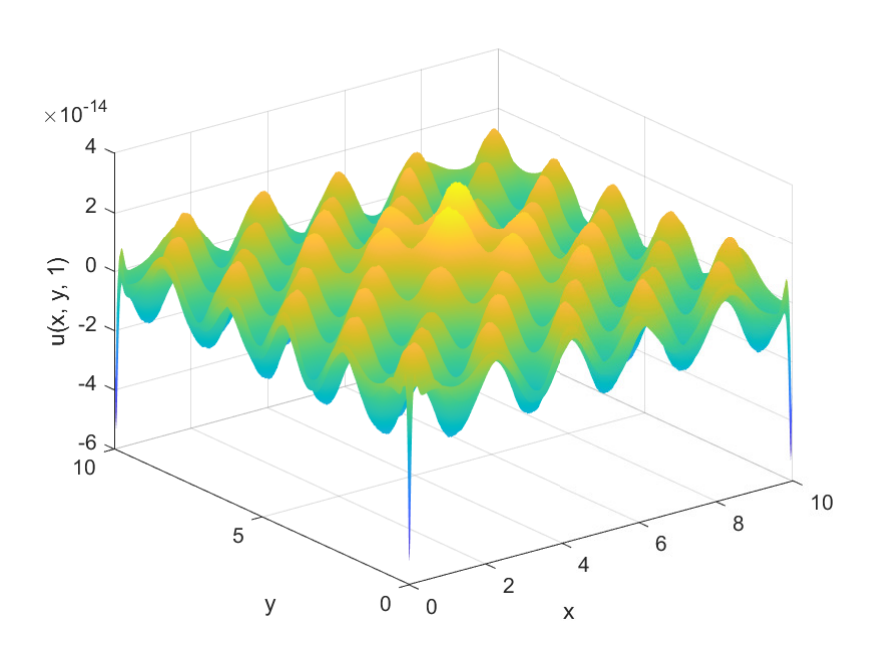}\\
\footnotesize{Difference  between \texttt{mExpeul} and \texttt{vExpeul} at $t=1$}
\end{minipage}
\mbox{\hspace{-1.5cm}}
\begin{minipage}{0.5\linewidth}
\centering
\includegraphics[width=6.5cm,height=5cm]{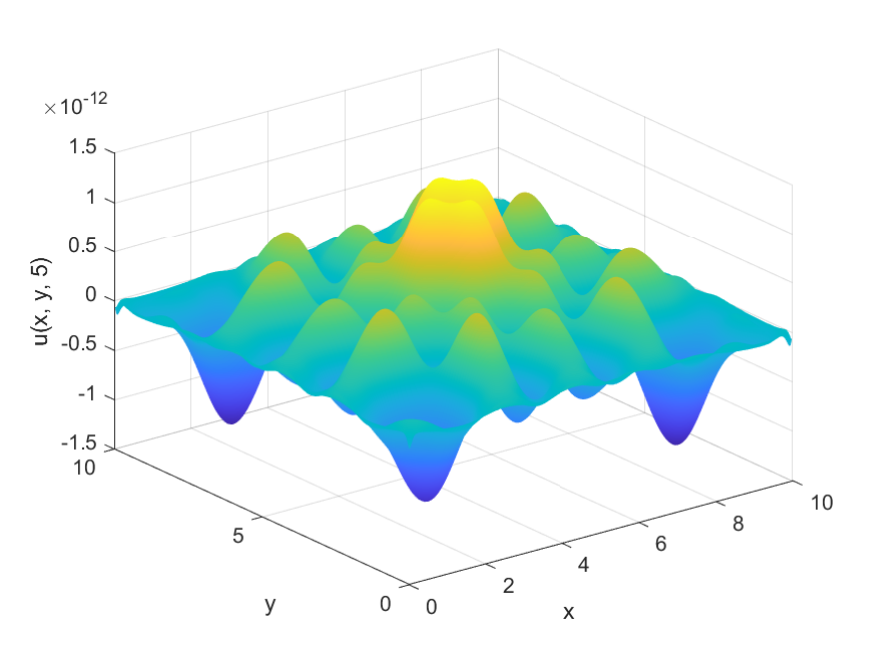}\\
\footnotesize{Difference  between \texttt{mExpeul} and \texttt{vExpeul} at $t=5$}
\end{minipage}
\caption{Results of \texttt{mExpeul} at $t=1$ (left) and $t=5$ (right) for Example \ref{exa1a}, respectively.}\label{fig5.1a}
\end{figure}

\begin{example} 
\label{exa1}
In the second example, we again consider  DLEs of the form (\ref{5.2c}). Here, the coefficient matrix $A\in\mathbb{R}^{N\times N}$ is generated using the same approach as in \cite{Mena}, which arises from the spatial finite difference discretization of
the two-dimensional heat equation
\begin{equation*}
\frac{\partial u}{\partial t}=\alpha \Delta u,~~u|_{\partial\Omega}=0
\end{equation*}
on $\Omega=(0,1)^2$ with $N = n^2$ inner discretization nodes
and mesh size $h = \frac{1}{n+1}$. Thus, we have
\begin{equation*}
 A =\alpha (n+1)^2(I_n\otimes K  + K\otimes I_n),~~K= \text{tridiag}(1,-2,1)\in\mathbb{R}^{n\times n}.
\end{equation*}
The low-rank matrices $B\in  \mathbb{R}^{N\times 5}$ and
$L_0\in  \mathbb{R}^{N\times 2}$ are randomly generated by MATLAB function \texttt{randn}, respectively.
\end{example}

To evaluate the performance of $\texttt{phi\underline{~}lyap\underline{~}ldl}$,
we compare \texttt{mExpeul} with two classes of low-rank integrators introduced in \cite{Lang2016}. These include
backward differentiation formula (BDF)  methods of orders 1 to 5 and Rosenbrock methods of orders 1 and 2. For simplicity,  we abbreviate them as \texttt{BDF1}, \texttt{BDF2}, \texttt{BDF3}, \texttt{BDF4},  \texttt{BDF5}, \texttt{Ros1}, and \texttt{Ros2}, respectively. In this example, the compression accuracy \texttt{Tol} of \texttt{mExpeul} is set as $N\cdot eps$.

In this test, we take $n=100$, resulting in $N=n^2=10^4$. Numerical results are provided for three 
 different values of the coefficients $\alpha=2\cdot10^{-4}$, $2\cdot10^{-3}$, and $2\cdot10^{-2}.$  As the value of $\alpha$ increases,  the stiffness of the problem also grows. The BDF methods and Rosenbrock methods are implemented with  
a constant time step size of $h= 0.01$. Due to memory limitations, it is not feasible to use \texttt{vExpeul} to compute the reference solution for this problem. Instead, \texttt{BDF3} is employed with a smaller time step of $h=10^{-4}$ to calculate the reference solutions.

Table~\Ref{tab5.2} summarizes  the performance of the methods in terms of accuracy and CPU time at the integration time $t = 1$.
The results show  that \texttt{mExpeul}, which leverages $\texttt{phi\underline{~}lyap\underline{~}ldl}$, outperforms the other methods
in efficiency. Theoretically, \texttt{mExpeul} is capable of generating approximations with arbitrary accuracy for DLEs.
However, due to the limitations in the accuracy of the reference solution produced, the errors of \texttt{mExpeul} reported 
in Table~\ref{tab5.2} should be regarded as a conservative estimate.

We also observe that the impact of all the methods on efficiency becomes particularly evident as 
the stiffness of the problem increases. For \texttt{mExpeul}, the norm of Lyapunov operator grows with
the problem's stiffness, which would, in turn, lead to a larger scaling parameter $s$
in $\texttt{phi\underline{~}lyap\underline{~}ldl}$ and increases the computational load. The performance could potentially be improved in the future by employing preprocessing techniques to reduce the norm of Lyapunov operator.

\begin{table}[h]
\caption{The relative errors and the CPU times (in seconds) for Example~\ref{exa1}}\label{tab5.2}
\begin{tabular*}{\textwidth}{@{\extracolsep\fill}lcccccc}
\toprule%
& \multicolumn{2}{@{}c@{}}{$\alpha=2\cdot10^{-4}$} & \multicolumn{2}{@{}c@{}}{$\alpha=2\cdot10^{-3}$}& \multicolumn{2}{@{}c@{}}{$\alpha=2\cdot10^{-2}$} \\\cmidrule{2-3}\cmidrule{4-5} \cmidrule{6-7}%
Methods &Error &Time &Error &Time &Error&Time  \\
\midrule
\texttt{BDF1} &2.4790e-03 &50.89    &1.6985e-03 &92.04   &1.4036e-03 &261.90\\
\texttt{BDF2}&4.9224e-05&47.73   &3.2074e-05&96.38   &2.7815e-05&245.92   \\
\texttt{BDF3} &6.3121e-06&50.85    &4.1075e-06&110.57   &3.5405e-06&314.05    \\
\texttt{BDF4} &6.5234e-06&55.04    &4.3161e-06&115.55   &3.7161e-06& 376.30   \\
\texttt{BDF5} &8.4644e-06& 60.08  &5.5856e-06&148.00   &4.8154e-06&560.94 \\
\texttt{Ros1}&2.4790e-03&45.75   &1.6985e-03&81.07  &1.4036e-03&195.12   \\
\texttt{Ros2}&1.8896e-04&304.34    &1.2398e-04&1519.27   &1.0741e-04&6266.49  \\
\texttt{mExpeul}&1.1435e-09 &1.93    &9.6709e-08 &9.38 &3.5272e-09&112.60 \\
\bottomrule
\end{tabular*} 
\end{table}

\begin{figure}[H]
\begin{minipage}{0.5\linewidth}
\centering
\includegraphics[width=6.5cm,height=5cm]{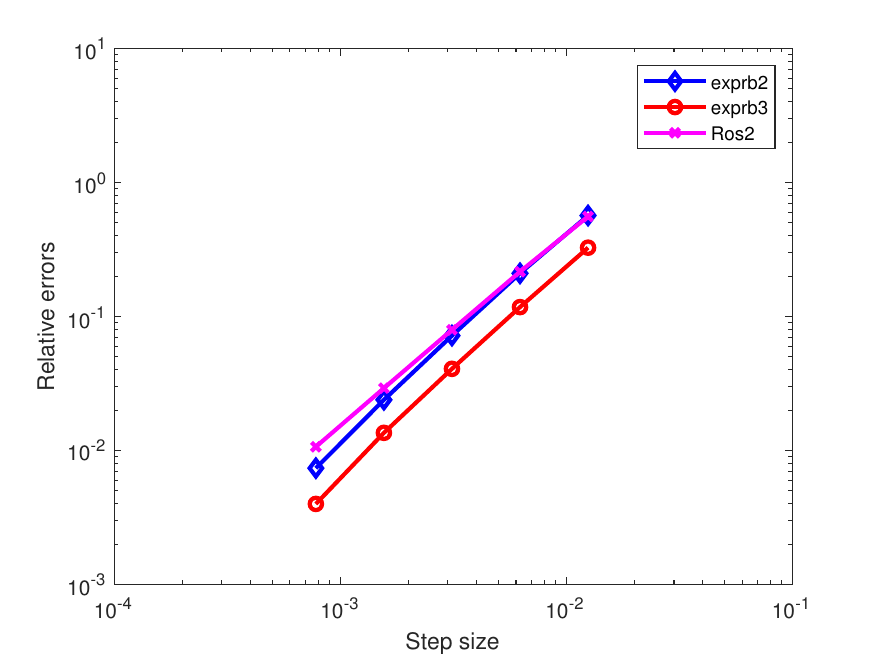}\\
\footnotesize{$N=1600$}
\end{minipage}
\mbox{\hspace{-1.5cm}}
\begin{minipage}{0.5\linewidth}
\centering
\includegraphics[width=6.5cm,height=5cm]{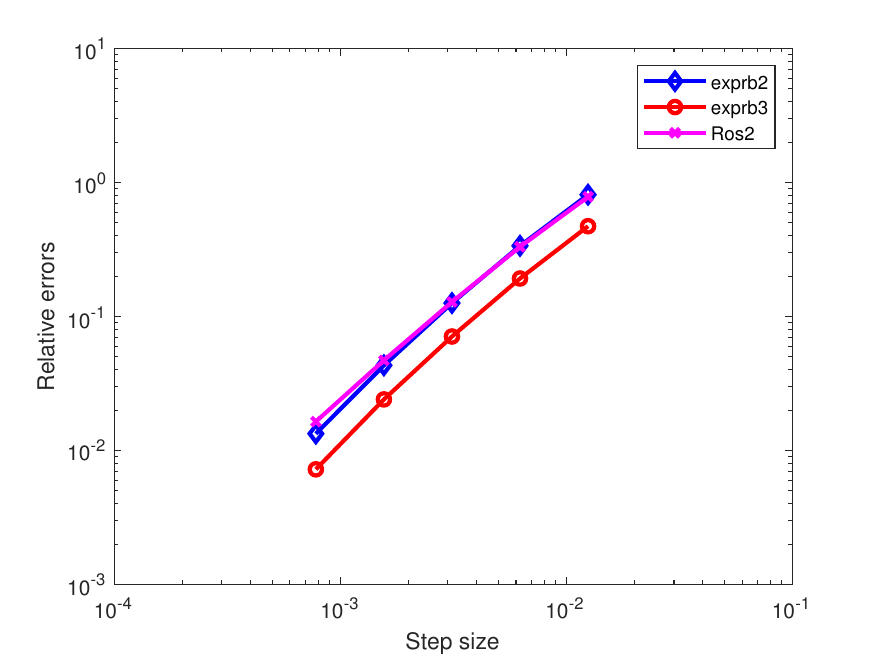}\\
\footnotesize{$N=2500$}
\end{minipage}\\
\begin{minipage}{0.5\linewidth}
\centering
\includegraphics[width=6.5cm,height=5cm]{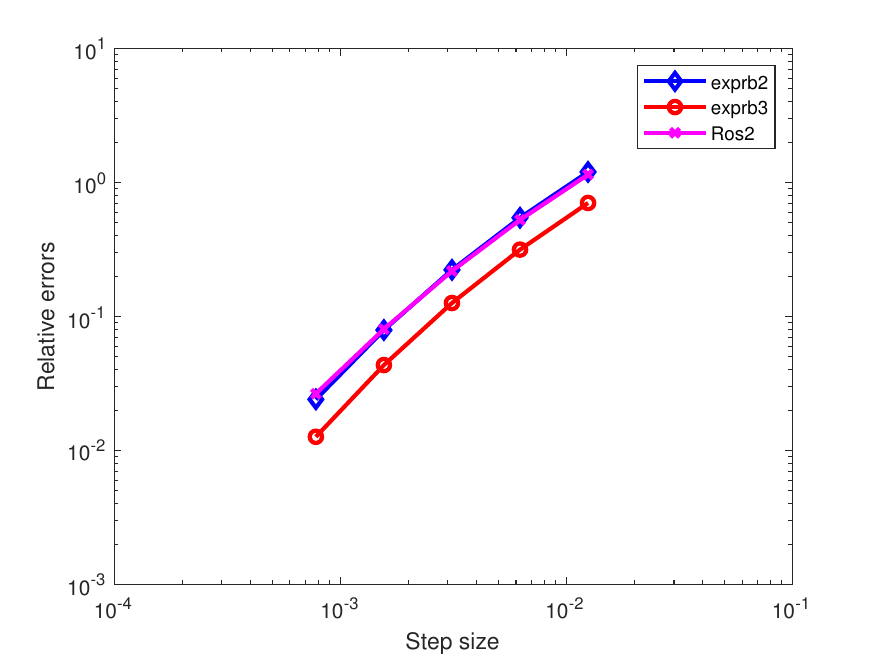}\\
\footnotesize{$N=3600$}
\end{minipage}
\mbox{\hspace{-1.5cm}}
\begin{minipage}{0.5\linewidth}
\centering
\includegraphics[width=6.5cm,height=5cm]{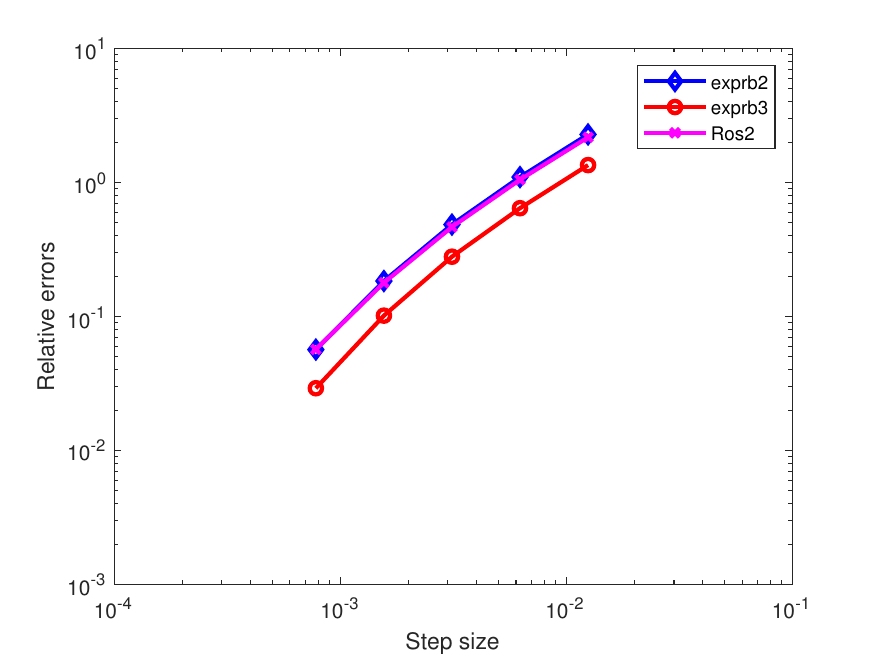}\\
\footnotesize{$N=6400$}
\end{minipage}
\caption{Relative errors of \texttt{exprb2}, \texttt{exprb3}, and \texttt{Ros2} under the discrete $L^2(0,0.1,\mathbb{R}^{N\times N})$-norm versus
various time step sizes for the integration of each equation in Example~\ref{exa2}.}\label{fig4.1a}
\end{figure}

\begin{figure}[tbhp]
\begin{minipage}{0.5\linewidth}
\centering
\includegraphics[width=6.5cm,height=5cm]{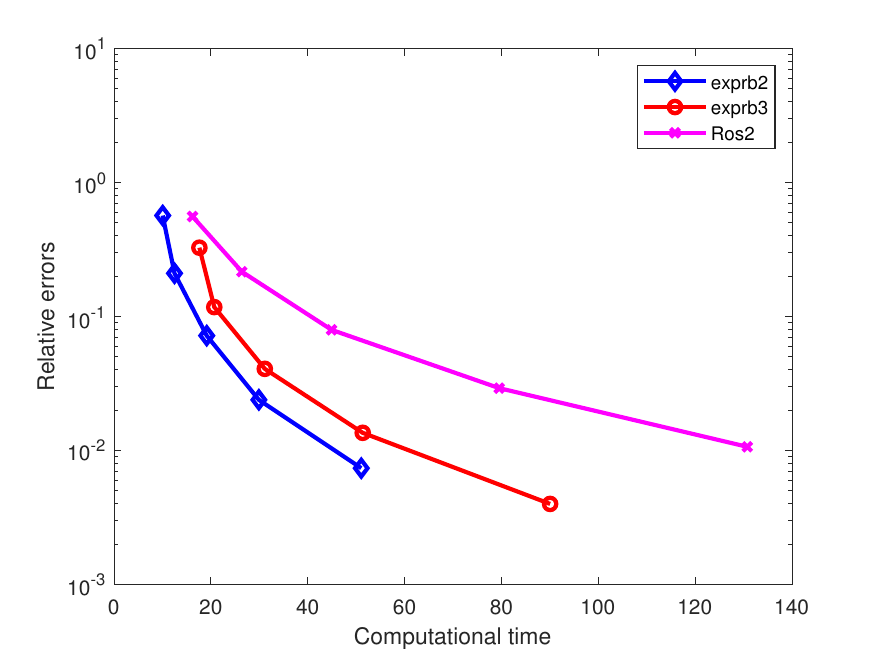}\\
\footnotesize{$N=1600$}
\end{minipage}
\mbox{\hspace{-1.5cm}}
\begin{minipage}{0.5\linewidth}
\centering
\includegraphics[width=6.5cm,height=5cm]{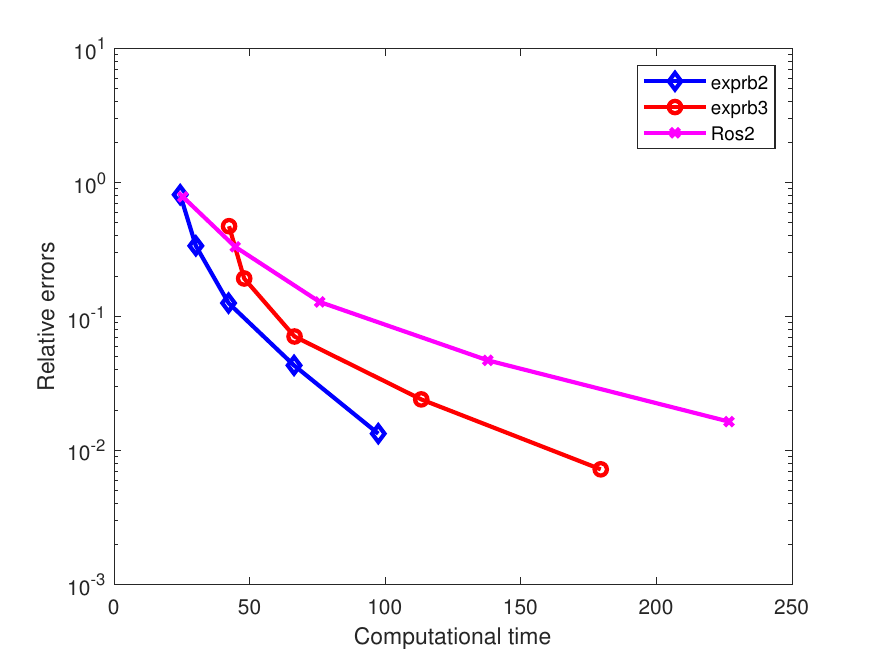}\\
\footnotesize{$N=2500$}
\end{minipage}\\
\begin{minipage}{0.5\linewidth}
\centering
\includegraphics[width=6.5cm,height=5cm]{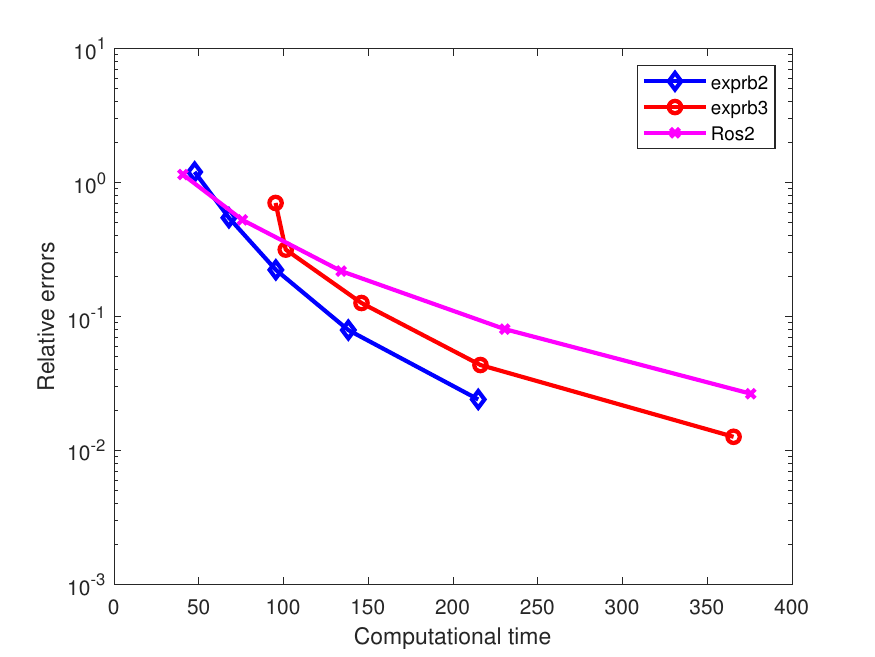}\\
\footnotesize{$N=3600$}
\end{minipage}
\mbox{\hspace{-1.5cm}}
\begin{minipage}{0.5\linewidth}
\centering
\includegraphics[width=6.5cm,height=5cm]{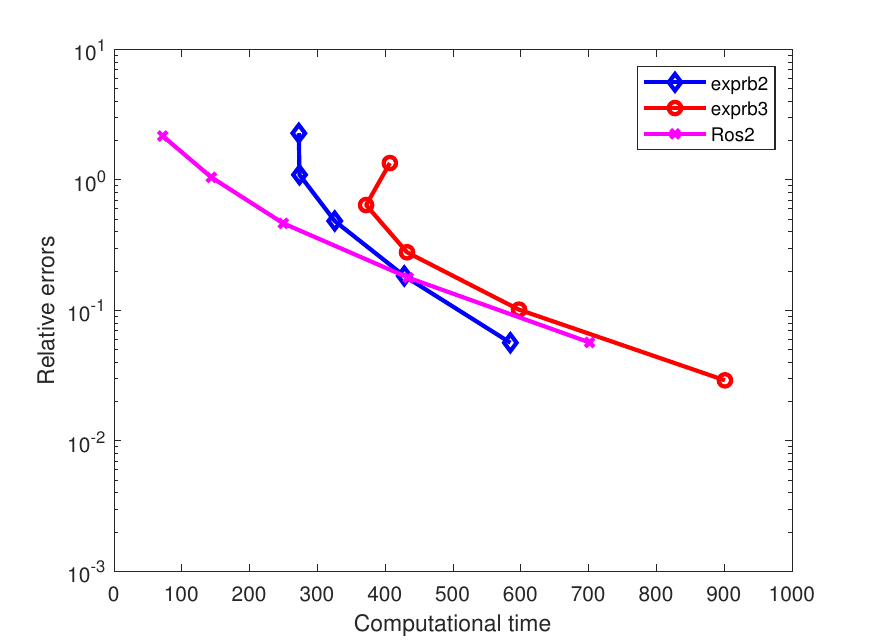}\\
\footnotesize{$N=6400$}
\end{minipage}
\caption{Relative errors of \texttt{exprb2}, \texttt{exprb3}, and \texttt{Ros2} under the discrete $L^2(0,0.1,\mathbb{R}^{N\times N})$-norm
versus computation times for the integration of each equation in Example~\ref{exa2}.}\label{fig4.1b}
\end{figure}

\begin{figure}[tbhp]
\begin{minipage}{0.5\linewidth}
\centering
\includegraphics[width=6.5cm,height=5cm]{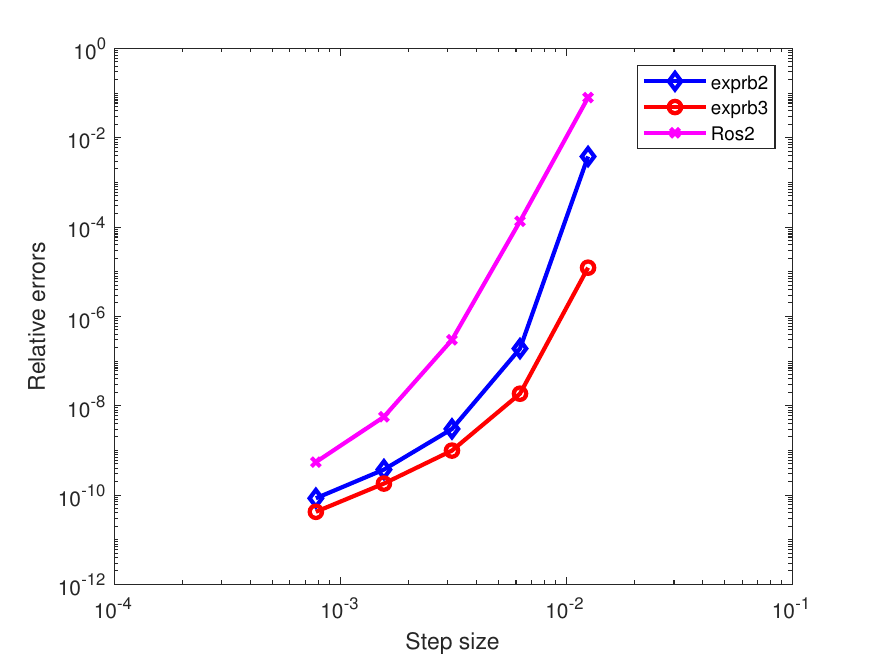}\\
\footnotesize{$N=1600$}
\end{minipage}
\mbox{\hspace{-1.5cm}}
\begin{minipage}{0.5\linewidth}
\centering
\includegraphics[width=6.5cm,height=5cm]{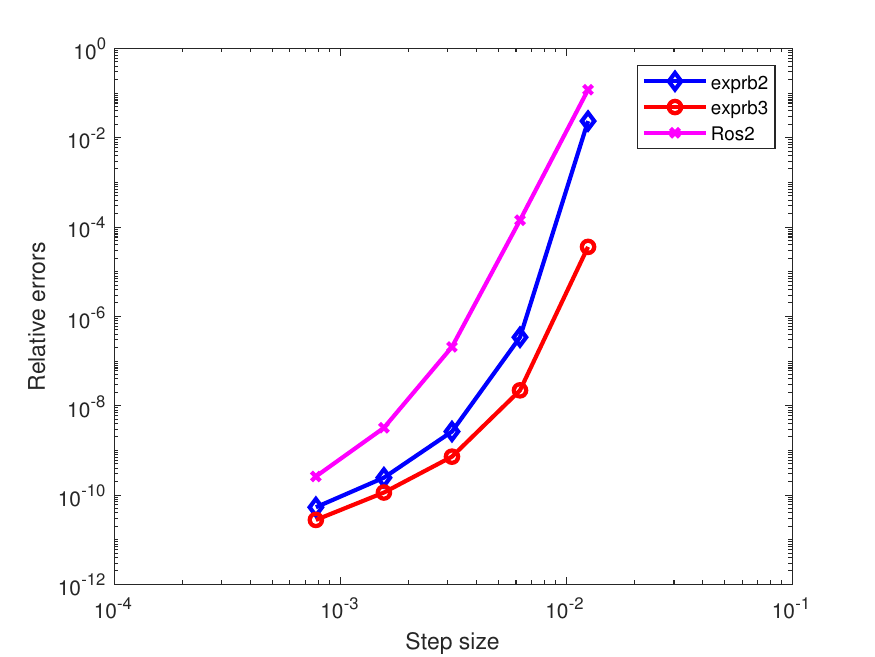}\\
\footnotesize{$N=2500$}
\end{minipage}\\
\begin{minipage}{0.5\linewidth}
\centering
\includegraphics[width=6.5cm,height=5cm]{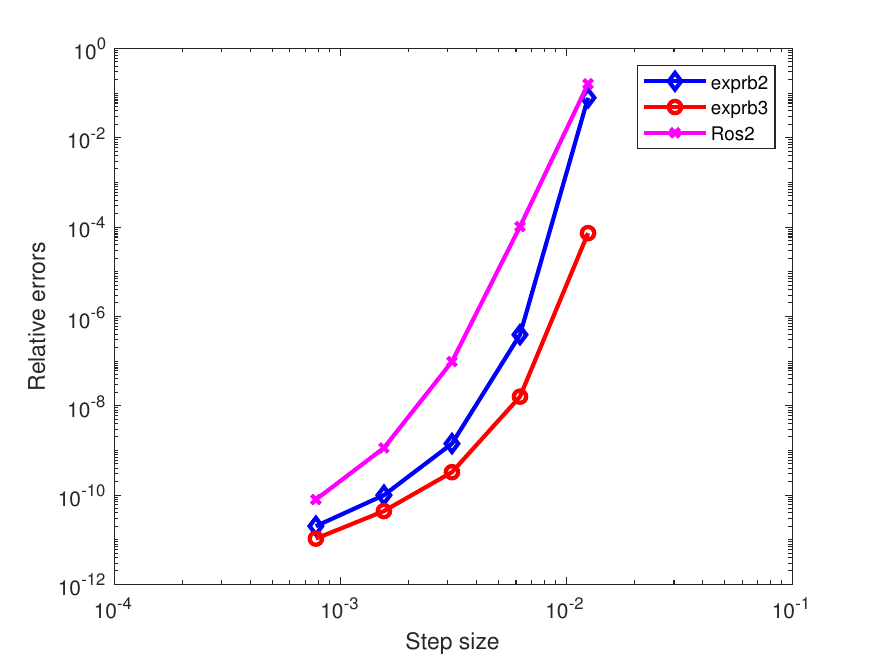}\\
\footnotesize{$N=3600$}
\end{minipage}
\mbox{\hspace{-1.5cm}}
\begin{minipage}{0.5\linewidth}
\centering
\includegraphics[width=6.5cm,height=5cm]{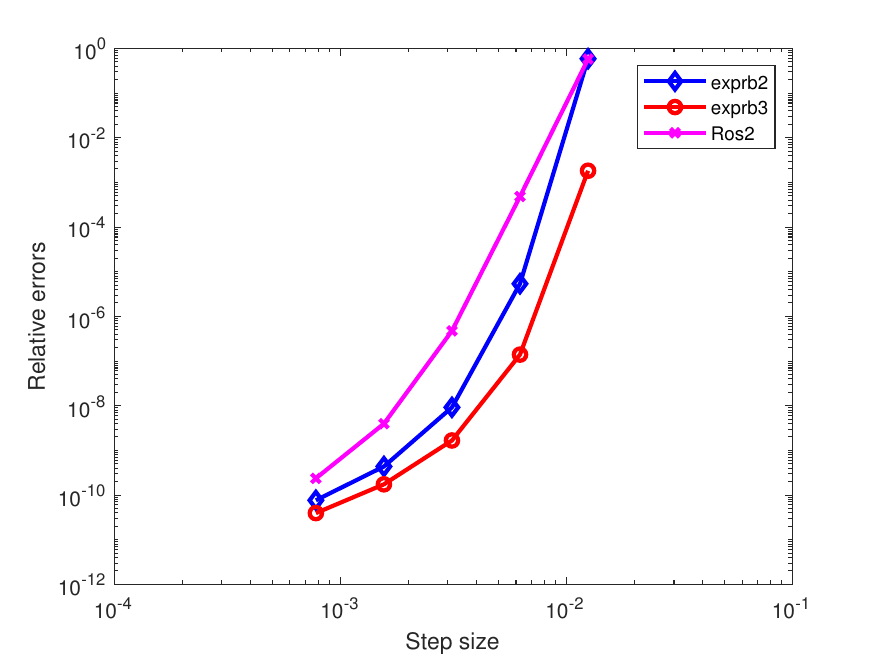}\\
\footnotesize{$N=6400$}
\end{minipage}
\caption{Relative errors of \texttt{exprb2}, \texttt{exprb3}, and \texttt{Ros2} at $T=0.1$ versus
various time step sizes for the integration of each equation in Example~\ref{exa2}.}\label{fig4.2a}
\end{figure}

\begin{figure}[tbhp]
\begin{minipage}{0.5\linewidth}
\centering
\includegraphics[width=6.5cm,height=5cm]{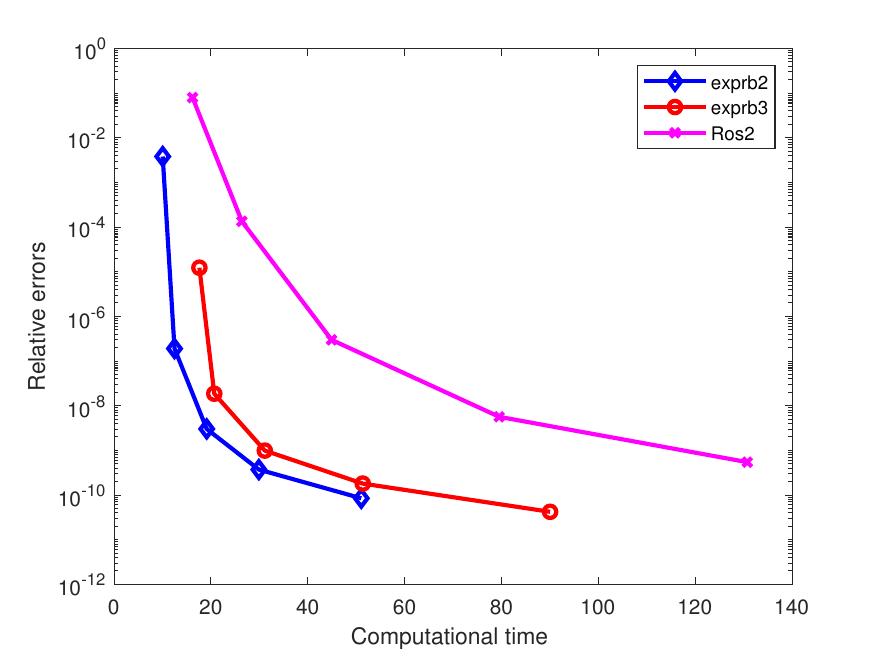}\\
\footnotesize{$N=1600$}
\end{minipage}
\mbox{\hspace{-1.5cm}}
\begin{minipage}{0.5\linewidth}
\centering
\includegraphics[width=6.5cm,height=5cm]{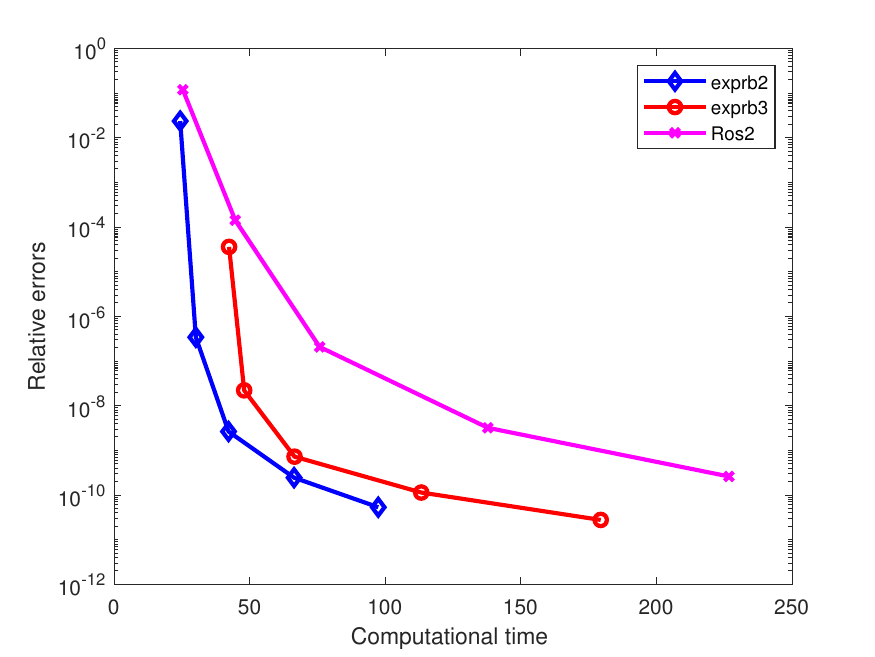}\\
\footnotesize{$N=2500$}
\end{minipage}\\
\begin{minipage}{0.5\linewidth}
\centering
\includegraphics[width=6.5cm,height=5cm]{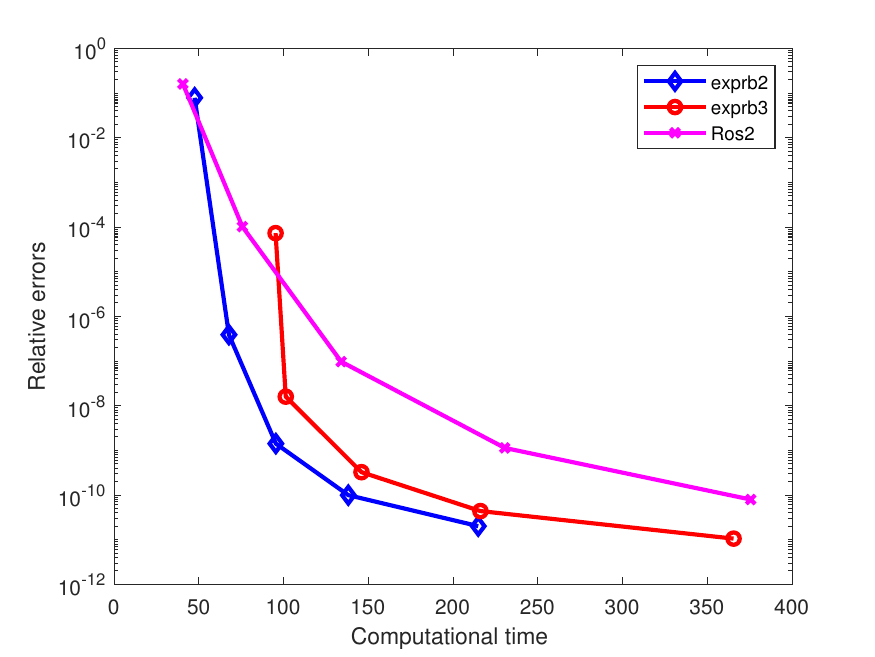}\\
\footnotesize{$N=3600$}
\end{minipage}
\mbox{\hspace{-1.5cm}}
\begin{minipage}{0.5\linewidth}
\centering
\includegraphics[width=6.5cm,height=5cm]{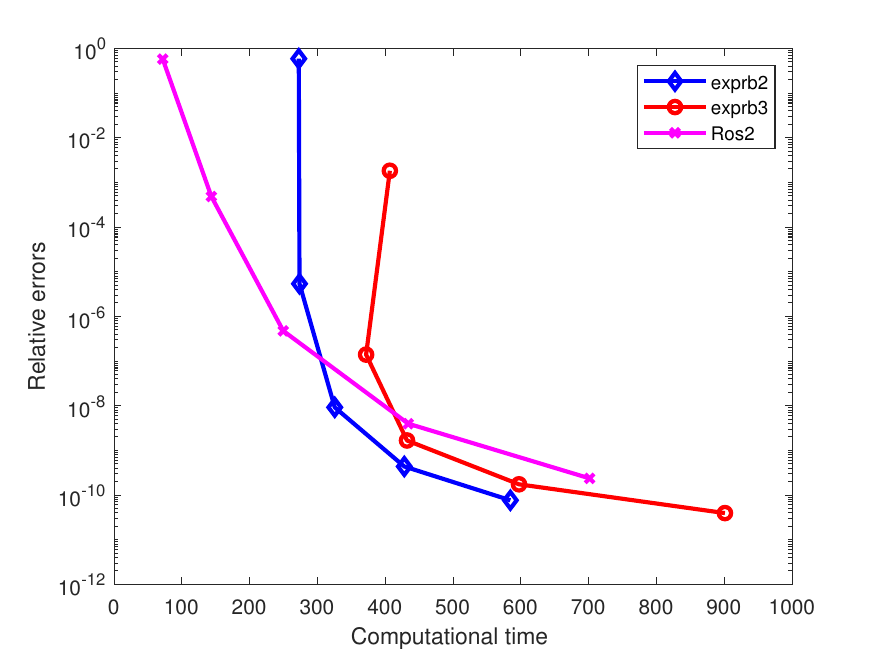}\\
\footnotesize{$N=6400$}
\end{minipage}
\caption{Relative errors of \texttt{exprb2}, \texttt{exprb3}, and \texttt{Ros2} at $T=0.1$
versus computation times for the integration of each equation in Example~\ref{exa2}.}\label{fig4.2b}
\end{figure}

\begin{example} \label{exa2}
Consider the DREs \cite{Penzl}
\begin{equation*}\label{5.2}
\left\{
\begin{array}{l}
X'(t)=AX(t)+X(t)A^T+C^T C-X(t)BB^TX(t),\\
X(0)=L_0L_0^T.
\end{array}
\right.
\end{equation*}
The coefficient matrix $A\in \mathbb{R}^{N\times N}$
results from the 5-point finite difference discretization of the advection-diffusion equation
\begin{equation*}
\frac{\partial u}{\partial t}=\Delta u - 10x \frac{\partial u}{\partial x} - 100y \frac{\partial u}{\partial y}
\end{equation*}
on the unit square $\Omega=(0,1)^2$ with homogeneous Dirichlet boundary conditions. The vectors $B, C^T\in \mathbb{R}^{N}$ serve as the corresponding load vectors.
These matrices are generated using MATLAB routines \texttt{fdm\underline{~}2d\underline{~}matrix} and \texttt{fdm\underline{~}2d\underline{~}vector} from LyaPack \cite{Penzl}, where  $A=\texttt{fdm\underline{~}2d\underline{~}matrix}(n_0,'10*x','100*y','0')$, \quad $B=\texttt{fdm\underline{~}2d\underline{~}vector}(n_0,'0.1< x\leq 0.3')$, \quad $C^T=\texttt{fdm\underline{~}2d\underline{~}vector}(n_0,'0.7< x\leq 0.9')$. Here, $n_0$ denotes the number of grid points in each spatial direction, and $N=n_0^2$. 
The low-rank factor $L_0\in\mathbb{R}^{N\times 1} $ of the initial value $X(0)$ is generated randomly.
\end{example}

This problem is a commonly used benchmark test. We simulate it over the time interval $[0, 0.1]$ using the time step
sizes $h\in\{\frac{1}{80}, \frac{1}{160}, \frac{1}{320}, \frac{1}{640}, \frac{1}{1280}\}$,  which correspond to $N_h\in\{8, 16, 32, 64, 128\}$ time steps, respectively.

As in \cite[Experiment 3]{Li2021}, we employ two low-rank matrix-valued exponential integration schemes, \texttt{exprb2} and \texttt{exprb3},
 to integrate the problem for four different dimensions $N=1600,2500,3600,6400$, respectively. The first scheme is second-order and involves the operator function $\varphi_1$, while the second scheme is third-order and requires the evaluation of both $\varphi_1$ and $\varphi_3$ at each time step. In \cite{Li2021}, 
these operator functions are computed using the numerical quadrature method. In contrast, we now use $\texttt{phi\underline{~}lyap\underline{~}ldl}$ to evaluate them.
 
For a comparison of the computational times and relative errors with respect to a reference solution, the two exponential integration schemes are also compared against the second-order Rosenbrock-type method (denoted by \texttt{Ros2}) as developed in M.E.S.S. toolbox \cite{Saak}. The common feature of these two types of is their ability to preserve the equilibrium point of the system. Specifically, this is reflected in the fact that the closer the integration process gets to the system's equilibrium point, the higher the accuracy of the algorithm. In our test, the \texttt{Ros2} method is executed using its default parameters. All the reference solutions are produced by \texttt{Ros2} with a finer time step size of $h=1/10240.$

Figs.~\ref{fig4.1a} and \ref{fig4.1b} present the accuracy and efficiency plots, respectively, for three methods (\texttt{exprb2}, \texttt{exprb3} and \texttt{Ros2}) under the discrete $L^2(0,0.1,\mathbb{R}^{N\times N})$-norm defined as
\begin{equation*}
Error= \sum\limits^{ N_h}_{k=1}h \frac{\|X(kh)-X_k\|_F}{\|X(kh)\|_F}, 
 \end{equation*}
where $X(kh)$ and $X_k$ are the reference solution and the numerical solution at the time $t=kh$, respectively.
The results demonstrate that all three methods achieve their expected convergence orders asymptotically, with \texttt{exprb2} and \texttt{Ros2} exhibiting nearly identical accuracy curves. The efficiency plots indicate that the matrix-valued exponential integrators are generally more efficient than \texttt{Ros2} in most cases.

Figs.~\ref{fig4.2a} and \ref{fig4.2b} present the accuracy and efficiency plots of the three integrators at the final time point $T=0.1$.
The results demonstrate  that \texttt{exprb2} and \texttt{exprb3} are generally more accurate and more efficient than \texttt{Ros2} for the same time step size.

Furthermore, Figs. \ref{fig4.1a} and \ref{fig4.2a} reveal significant differences in the error of the matrix-valued exponential integrators under two distinct error metrics. Specifically, the errors measured using the $L^2(0,0.1,\mathbb{R}^{N\times N})$-norm are substantially larger than those observed at the final time point $T=0.1$. This indicates that these methods incur greater errors during transient states. A potential improvement could involve developing adaptive matrix-valued exponential integrators, enabling the use of smaller time steps near transient states and larger time steps closer to steady-state solutions. The construction of such adaptive methods, however, is beyond the scope of this paper and will be addressed in future research.

The M.E.S.S. toolbox also provides two additional methods for computing low-rank solutions of DREs: the BDF method and the splitting method. The results of the BDF method are not included here because it fails to produce convergent solutions in some cases, despite its capability to preserve the system's steady-state solutions. In contrast, the splitting method exhibits different behavior compared to the three methods discussed earlier. Possibly due to its smaller leading coefficient of the local truncation error, it produces smaller errors near transient states. However, the other methods generally exhibit higher accuracy near steady-state solutions. Regarding computational efficiency, the splitting method may be better suited for comparison with adaptive exponential integrators, a subject for future research.

\section{Conclusion}\label{sec:5}

We have developed a low-rank algorithm to compute the Lyapunov operator 
$\varphi$-functions arising from matrix-valued exponential integrators. The algorithm's performance is evaluated by comparing matrix-valued exponential integrators against several state-of-the-art methods. Numerical results confirm the method's effectiveness and reliability, highlighting its potential as a robust foundation for solving large-scale DLEs and DREs. Building on this method, we plan to develop adaptive matrix-valued exponential integrators and evaluate their performance on more matrix differential equations.
The proposed method can be extended to compute the $\varphi$-functions of Sylvester operators, broadening its applicability. Future work will also focus on enhancing the algorithm's performance through preprocessing techniques such as shifting and balancing strategies.

\section*{Acknowledgements}
 This work of Dongping Li was supported by the Jilin Scientific and Technological Development Program (Grant No. YDZJ202501ZYTS635), the Natural Science Foundation of Jilin Province (Grant No. JJKH20240999KJ) and the Natural Science Foundation of China (Grant No. 12371455). The work of Hongjiong Tian is supported by the National Natural Science Foundation of China (Grant No. 12271368), the Science and Technology Innovation Plan of Shanghai (Grant No. 20JC1414200) and Shanghai Rising-Star Program (Grant No. 22QA1406900).

\bibliographystyle{elsarticle-num}
\bibliography{reference}

\end{document}